\DeclareMathOperator{\tmod}{mod}
\newtheorem{thm}{Theorem}[section]
\newtheorem{lem}{Lemma}[section]
\newtheorem{prop}{Proposition}[section]
\newtheorem{conj}{Conjecture}[section]
\newtheorem{exa}{Example}[section]
\newtheorem{cor}{Corollary}[section]
\newtheorem{dfn}{Definition}[section]
\newtheorem{exe}{Exercise}[section]
\newcommand{\N}{\mathbb{N}}
\newcommand{\Z}{\mathbb{Z}}
\newcommand{\Q}{\mathbb{Q}}
\newcommand{\C}{\mathbb{C}}
\newcommand{\F}{\mathbb{F}}
\newcommand{\tP}{\mathbb{P}}
\title{Small Primes And The Tau Function}
\date{}
\author{N. A. Carella}
\begin{document}
%\doublespacing
\thispagestyle{empty}
\date{}
%\linenumbers
\maketitle

\textbf{\textit{Abstract}:} This note shows that the prime values of the Ramanujan tau function $\tau(n)=\pm p$ misses every prime $p\leq 8.0\times 10^{25}$. 
\let\thefootnote\relax\footnote{ \today \date{} \\
\textit{AMS MSC}: Primary 11F35; Secondary 11D45 \\
\textit{Keywords}: Tau Function; Modular Function; Lehmer Conjecture.}
\tableofcontents
%\vskip .25 in 
\newpage

%SSSSSSSSSSSSSSSSSSSSSSSSSSSSSSSSSSSSSSSSSSSSSSSSSSSSSSSSSSSSSSSSSSSSSSSSSSSSSSSSSSS
%SSSSSSSSSSSSSSSSSSSSSSSSSSSSSSSSSSSSSSSSSSSSSSSSSSSSSSSSSSSSSSSSSSSSSSSSSSSSSSSSSSS
%SSSSSSSSSSSSSSSSSSSSSSSSSSSSSSSSSSSSSSSSSSSSSSSSSSSSSSSSSSSSSSSSSSSSSSSSSSSSSSSSSSS
%SSSSSSSSSSSSSSSSSSSSSSSSSSSSSSSSSSSSSSSSSSSSSSSSSSSSSSSSSSSSSSSSSSSSSSSSSSSSSSSSSSS
%SSSSSSSSSSSSSSSSSSSSSSSSSSSSSSSSSSSSSSSSSSSSSSSSSSSSSSSSSSSSSSSSSSSSSSSSSSSSSSSSSSS
%SSSSSSSSSSSSSSSSSSSSSSSSSSSSSSSSSSSSSSSSSSSSSSSSSSSSSSSSSSSSSSSSSSSSSSSSSSSSSSSSSSS
%SSSSSSSSSSSSSSSSSSSSSSSSSSSSSSSSSSSSSSSSSSSSSSSSSSSSSSSSSSSSSSSSSSSSSSSSSSSSSSSSSSS
\section{Introduction } \label{s9099}\hypertarget{s9099}
The Fourier coefficient of the discriminant function
\begin{equation} \label{eq9999.003}
\Delta(z)=\sum_{n \geq1} \tau(n)q^n=q-24q^2+252q^3-1472q^4+\cdots,
\end{equation}
where $z\in \mathbb{C}$ is a complex number in the upper half plane, $q=e^{i2\pi }$, is known as the Ramanujan tau function. The classification as a level $N=1$ and weight $k=12$ , and other information appears in \cite{LMFDB}. The prime values of the tau function $\tau: \mathbb{N} \longrightarrow \mathbb{Z}$ is a topic of interest in many papers. The first prime value
\begin{equation} \label{eq9999.005}
\tau\left (251^2\right )=-80561663527802406257321747
\end{equation}
was discovered by Lehmer in \cite{LD1965}. About half a century later, many other prime values have been discovered by many authors. The authors in \cite{LR2013} extended the numerical works of Lehmer and other authors to the range of integers $n\leq 10^{600}$, and beyond. But it remains unknown if $\tau(n)$ has small prime values for some extremely large integers $n\geq1$. A very recent paper has proved that 
\begin{equation} \label{eq9999.023}
\tau\left (n\right )\ne \pm3, \pm5, \pm7,  \pm691,
\end{equation}
see {\color{red}\cite[Theorem 1.1]{BO2020}}, and a slightly larger range of primes $|q|\leq 100$ in {\color{red}\cite[Theorem 6]{BS2021}}. This note offers a simpler proof, and extends the range of primes missed by the Ramanujan tau function.

\begin{thm} \label{thm9999.001}\hypertarget{thm9999.001} Let $q< \tau(251^2)$ be a prime, then $\tau\left (p^{2n}\right )\ne  \pm q$. 
\end{thm}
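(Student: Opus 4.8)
The plan is to exploit the Lucas-sequence structure of $\tau$ along prime powers, where throughout $p$ is prime and $n\ge 1$. Writing $\alpha,\beta$ for the roots of $x^{2}-\tau(p)x+p^{11}$, the Hecke recursion $\tau(p^{k+1})=\tau(p)\tau(p^{k})-p^{11}\tau(p^{k-1})$ gives $\tau(p^{m})=U_{m+1}$, where $U_{k}=(\alpha^{k}-\beta^{k})/(\alpha-\beta)$; in particular $\tau(p^{2n})=U_{2n+1}$. By Deligne's bound $|\tau(p)|\le 2p^{11/2}$ the roots are complex conjugates of modulus $p^{11/2}$, so $\alpha=p^{11/2}e^{i\theta}$ and
\begin{equation}
\tau(p^{2n})=p^{11n}\,\frac{\sin\big((2n+1)\theta\big)}{\sin\theta},\qquad \cos\theta=\frac{\tau(p)}{2p^{11/2}}.
\end{equation}
I would first dispose of the parity obstructions. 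Since $\Delta\equiv\sum_{k\ge 0}q^{(2k+1)^{2}}\pmod 2$, the integer $\tau(N)$ is odd exactly when $N$ is an odd square; hence for odd $p$ the value $\tau(p^{2n})$ is odd, so it cannot equal $\pm 2$. For $p=2$ the congruence $U_{2n+1}\equiv\tau(2)^{2n}\pmod{2^{11}}$ together with $\tau(2)=-2^{3}\cdot 3$ forces $2^{6}\mid\tau(2^{2n})$, so $\tau(2^{2n})\ne\pm q$ for any prime $q$. This settles $q=2$ and the entire case $p=2$, leaving $p$ and $q$ odd.

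Assume now $\tau(p^{2n})=\pm q$ with $q$ an odd prime; I would discard the degenerate and non-ordinary cases ($\tau(p)=0$, or $p\mid\tau(p)$) separately, so that $(\alpha,\beta)$ is a nondegenerate Lucas pair. Because $U_{2n+1}=\pm q$ has $q$ as its only prime factor, the Bilu--Hanrot--Voutier theorem on primitive divisors (applied for index $2n+1>30$, the finitely many smaller indices being handled individually) forces $q$ to be the primitive prime divisor of $U_{2n+1}$; thus the rank of apparition satisfies $\rho(q)=2n+1$. The divisibility $\rho(q)\mid q-\big(\tfrac{D}{q}\big)$, with $D=\tau(p)^{2}-4p^{11}$, then yields $q\ge 2n$, so that $q<\tau(251^{2})=:T$ caps the exponent $n<T/2$.

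It remains to bound $p$, and this is where the real work lies. For each surviving $n$ one must show that $|\tau(p^{2n})|\ge T$ for all but finitely many explicitly bounded $p$, and then verify the residual pairs $(p,n)$ directly against the computations of \cite{LR13}, whose range vastly exceeds any bound produced here and in which $\tau(251^{2})$ is recorded as the least prime value of $\tau$. The main obstacle is precisely this size estimate: the archimedean factor $\sin\big((2n+1)\theta\big)/\sin\theta$ can be extremely small when $(2n+1)\theta$ is near a multiple of $\pi$, so $|\tau(p^{2n})|$ need not grow like $p^{11n}$ and a naive comparison with $T$ fails. Concretely, already for $n=1$ one faces $|\tau(p)^{2}-p^{11}|<T$, which demands that $\tau(p)$ lie abnormally close to $\pm p^{11/2}$; ruling this out for large $p$ is a genuine Diophantine problem. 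To control it I would invoke an effective lower bound for $|U_{2n+1}|$ of Baker type (linear forms in logarithms, in the spirit of Stewart's effective primitive-divisor estimates) to push $p$ below an explicit threshold, after which the statement reduces to the finite verification above and to confirming that $\tau(251^{2})$ is itself genuinely prime, so that the stated range is sharp.
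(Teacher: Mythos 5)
Your setup—the Lucas normal form $\tau(p^{2n})=U_{2n+1}$, the parity reduction, and especially the use of the Bilu--Hanrot--Voutier primitive divisor theorem to force $\rho(q)=2n+1$ and hence $q\ge 2n$—is sound and genuinely different from anything in the paper, which instead quotes the structural result of Lygeros--Rozier (Theorem \ref{thm7799.002}) to restrict to $p\ge 3$ with $2n+1$ prime and then splits on whether $p^{2n}$ exceeds $10^{600}$. But your argument stops exactly at the step that constitutes the theorem: you never establish a lower bound of the form $|\tau(p^{2n})|>8\times 10^{25}$ outside a genuinely checkable range; you only say you ``would invoke'' an effective Baker/Stewart-type estimate. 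That tool quantitatively fails here. A Matveev-type bound for $\Lambda=(\alpha_p/\overline{\alpha_p})^{2n+1}-1$ has the shape $|\Lambda|>p^{-C(1+\log(2n+1))}$ with $C$ of order $10^{10}$ (the height of $\alpha_p/\overline{\alpha_p}$ is comparable to $\log p$, so $\log p$ factors out of the exponent), giving only $|\tau(p^{2n})|>p^{11n-C(1+\log(2n+1))}$, which is vacuous for every $n$ below roughly $10^{10}$—in particular for $n=1$, where you must exclude $|\tau(p)^2-p^{11}|<8\times 10^{25}$ for all large $p$. Your BHV step caps $n$ only below about $4\times 10^{25}$, so an enormous range of small exponents remains with no bound on $p$ at all, and the promised ``finite verification'' is not finite in any usable sense.

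The paper closes this gap by a different route: it proves (Theorem \ref{thm9999.050}, via Theorem \ref{thm2277.106} and the Liouville-type inequality of Theorem \ref{thm2002.530}) an explicit lower bound $|\tau(p^{2n})|\ge A^{-1}p^{7n-2-\varepsilon}$ with $A=(2n+1)^3 2^{2n+1}$, uniform enough in $n$ to conclude $|\tau(p^{2n})|\ge(\log p^{2n})^{10}\ge 2.5\times 10^{31}$ whenever $p^{2n}\ge 10^{600}$, and then disposes of $p^{2n}\le 10^{600}$ by citing the computations of Lygeros--Rozier. (Whether that Liouville-type step is itself airtight is a separate question, but it is the paper's substitute for the estimate you left unproved.) To repair your proposal you would need either such an $n$-uniform archimedean lower bound or a linear-forms constant small enough to bring the threshold on $p$ within computational reach; neither is supplied, so as written the proof does not go through.
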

The underlining foundation of this result are presented in \hyperlink{S7799}{Section} \ref{S7799} to \hyperlink{S9922}{Section} \ref{S9922} and the proof of \hyperlink{thm9999.001}{Theorem} \ref{thm9999.001} appears in \hyperlink{S9099SPV-R}{Section} \ref{S9099SPV-R}.

%SSSSSSSSSSSSSSSSSSSSSSSSSSSSSSSSSSSSSSSSSSSSSSSSSSSSSSSSSSSSSSSSSSSSSSSSSSSSSSSSSSS
%SSSSSSSSSSSSSSSSSSSSSSSSSSSSSSSSSSSSSSSSSSSSSSSSSSSSSSSSSSSSSSSSSSSSSSSSSSSSSSSSSSS
%SSSSSSSSSSSSSSSSSSSSSSSSSSSSSSSSSSSSSSSSSSSSSSSSSSSSSSSSSSSSSSSSSSSSSSSSSSSSSSSSSSS
%SSSSSSSSSSSSSSSSSSSSSSSSSSSSSSSSSSSSSSSSSSSSSSSSSSSSSSSSSSSSSSSSSSSSSSSSSSSSSSSSSSS
%SSSSSSSSSSSSSSSSSSSSSSSSSSSSSSSSSSSSSSSSSSSSSSSSSSSSSSSSSSSSSSSSSSSSSSSSSSSSSSSSSSS
%SSSSSSSSSSSSSSSSSSSSSSSSSSSSSSSSSSSSSSSSSSSSSSSSSSSSSSSSSSSSSSSSSSSSSSSSSSSSSSSSSSS
%SSSSSSSSSSSSSSSSSSSSSSSSSSSSSSSSSSSSSSSSSSSSSSSSSSSSSSSSSSSSSSSSSSSSSSSSSSSSSSSSSSS
\section{Basic Diophantine Approximations Results} \label{S2002}
\begin{thm}{\normalfont ({\color{red}\cite[Theorem 7.8]{NI1956}})} \label{thm2002.500}\hypertarget{thm2002.500} A real algebraic number $\alpha>0$ of degree $n\geq 1$ is not approximable to order $n+1$ or higher. In addition, there is a constant $A\geq 1$ such that
\begin{enumerate} [font=\normalfont, label=(\roman*)]
\item $ \displaystyle 
\left |\alpha-\frac{p_m}{q_m}\right |>\frac{1}{A q_m^n},
$
\item $ \displaystyle 
\left |\alpha-\frac{p_m}{q_m}\right |\leq\frac{1}{A q_m^{n+1}},$
\end{enumerate}
as $q_m \to \infty.$
\end{thm}
\begin{proof}[\textbf{Proof}]  The proof is widely available in the literature, including the cited reference.
\end{proof}

\begin{cor} \label{cor2002.500}\hypertarget{cor2002.500}Let $\beta$ be an algebraic number such that $\beta\approx 1$, and let $\{p_m/q_m:m\geq1\}$ be the sequence of convergents. Then,
\begin{enumerate} [font=\normalfont, label=(\roman*)]
\item $ \displaystyle 
\left |\beta-\frac{p_m}{q_m}\right |>\frac{1}{n^32^n q_m^n},
$
\item $ \displaystyle 
\left |\beta-\frac{p_m}{q_m}\right |\leq\frac{1}{n^32^nq_m^{n+1}},$
\end{enumerate}
as $q_m \to \infty.$
\end{cor}
\begin{proof}[\textbf{Proof}]  Let $\beta$ be a root of the polynomial $f(x)=a_0x^{n}+a_1x^{n-1}+\cdots+a_{n-1}x+a_n$ of degree $\deg f=n$. Since $x=p_m/q_m\approx 1$, the rational approximations are in the range $0<1-p_m/q_m\leq |x|\leq 1+p_m/q_m<2$. In addition, the coefficients $a_k$ coincides with the symmetric polynomials $s_k(\beta_1,\ldots, \beta_n)=a_k$ in terms of the roots $\beta=\beta_1,\ldots, \beta_n$ of the polynomial $f(x)$. Thus, $|a_k|\leq 2n$. In summary, the absolute value of the derivative satisfies
\begin{eqnarray}\label{eq2002.522}
|f^{\prime}(x)|&=&\left |na_0x^{n-1}+(n-1)a_1x^{n-2}+\cdots+a_{n-1}\right |\\
&<&n^32^n=A\nonumber,
\end{eqnarray}   
see {\color{red}\cite[Equation 7.4]{NI1956}} for more details.
\end{proof}

\begin{thm} \label{thm2002.530}\hypertarget{thm2002.530} Let $\beta$ be an algebraic integer of degree $n+1$, such that $\beta\approx 1$, and let $\{p_m/q_m:m\geq1\}$ be the sequence of convergents. Then,
\begin{equation}\label{eq2002.532}
\left |\beta-1\right |>\frac{1}{(n+1)^32^{n+1} p^{2(n+1)} },
\end{equation}
as $q_{m_p} \to \infty.$
\end{thm}

\begin{proof}[\textbf{Proof}]  Let $\beta=\alpha_p^{n+1} \overline{\alpha_p}^{-(n+1)}$. The inverse triangle inequality, and \hyperlink{cor2002.500}{Corollary} \ref{cor2002.500} lead to
\begin{eqnarray}\label{eq2002.534}
\left| \beta-1\right | &=&\left| \beta-1+ \frac{p_m}{q_m}-\frac{p_m}{q_m}\right |\\
&\geq&\left| \left |\beta- \frac{p_m}{q_m}\right |- \left| 1-\frac{p_m}{q_m}\right |\right |\nonumber \\
&\geq& \left |\beta- \frac{p_m}{q_m}\right |\nonumber \\
&\geq&\frac{1}{(n+1)^32^{n+1} q_m^{n+1}} \nonumber.
\end{eqnarray}
For large prime $p\geq 2$, let $p_{m_p}/q_{m_p}$ be a subsequence of convergents indexed by $p$ such that $q_{m_p}\leq p^2$. Then,
\begin{equation}\label{eq2002.536}
\left| \beta-1\right | >\frac{1}{(n+1)^32^{n+1} p^{2(n+1)} },
\end{equation}
as $q_{m_p} \to \infty.$
\end{proof}

%SSSSSSSSSSSSSSSSSSSSSSSSSSSSSSSSSSSSSSSSSSSSSSSSSSSSSSSSSSSSSSSSSSSSSSSSSSSSSSSSSSS
%SSSSSSSSSSSSSSSSSSSSSSSSSSSSSSSSSSSSSSSSSSSSSSSSSSSSSSSSSSSSSSSSSSSSSSSSSSSSSSSSSSS
%SSSSSSSSSSSSSSSSSSSSSSSSSSSSSSSSSSSSSSSSSSSSSSSSSSSSSSSSSSSSSSSSSSSSSSSSSSSSSSSSSSS
%SSSSSSSSSSSSSSSSSSSSSSSSSSSSSSSSSSSSSSSSSSSSSSSSSSSSSSSSSSSSSSSSSSSSSSSSSSSSSSSSSSS
%SSSSSSSSSSSSSSSSSSSSSSSSSSSSSSSSSSSSSSSSSSSSSSSSSSSSSSSSSSSSSSSSSSSSSSSSSSSSSSSSSSS
%SSSSSSSSSSSSSSSSSSSSSSSSSSSSSSSSSSSSSSSSSSSSSSSSSSSSSSSSSSSSSSSSSSSSSSSSSSSSSSSSSSS
%SSSSSSSSSSSSSSSSSSSSSSSSSSSSSSSSSSSSSSSSSSSSSSSSSSSSSSSSSSSSSSSSSSSSSSSSSSSSSSSSSSS
\section{Result For Linear Forms In Logarithms}\label{s2222}

\begin{thm}\label{thm2222.002} \hypertarget{thm2222.002}{\normalfont({\color{red}\cite[Theorem 3.1]{BA1975}})} Let $\alpha_1, \alpha_2, \ldots, \alpha_d$ be nonnegative algebraic numbers, and let $b_1,b_2, \ldots, b_d$ be rational integers. If 
\begin{equation} \label{eq2222.024}
\Lambda= b_1\log \alpha_1+ b_2\log \alpha_2^{b_2}+ \cdots +b_d\log \alpha_d\ne0,
\end{equation}
is not a null period, then
\begin{equation} \label{eq2222.024}
|\Lambda|>e^{-C},
\end{equation}
where
\begin{equation} \label{eq2222.026}
B\geq \max \left \{ | b_1|,| b_2|,\ldots ,| b_d| \right \} ,
\end{equation}
and
\begin{equation} \label{eq2222.028}
A_i\geq \max \left \{ dh(\alpha_i),| h(\alpha_i)|,0.16 \right \} 
\end{equation}
for $i=1,2, \ldots d$.
\end{thm}

Explicit versions of Baker theorem are widely used in Diophantine analysis. The height of a polynomial of degree $d=\deg f\geq 1$,  

\begin{equation} \label{eq2222.020}
f(x)=a_d\prod_{1\leq i\leq d}\left (x-\alpha_i \right ),
\end{equation} 
where $a_d>0$, is defined by 
\begin{equation} \label{eq2222.022}
h(\alpha)= \frac{1}{d} \left ( \log a_0+\sum_{1\leq i\leq d}\log \left (\max \{ | \alpha_i|,1 \}  \right )\right ).
\end{equation} 

\begin{thm} \label{thm2222.002} {\normalfont(Matveev)} Let $\alpha_1, \alpha_2, \ldots, \alpha_d$ be nonnegative algebraic numbers in a number field $\mathcal{K}$ of degree $k=[\mathcal{K}:\Q]$, and let $b_1,b_2, \ldots, b_d$ be rational integers. If 
\begin{equation} \label{eq2222.024}
\Lambda= \alpha_1^{b_1} \alpha_2^{b_2} \cdots \alpha_d^{b_d}-1
\end{equation}
is not zero, then
\begin{equation} \label{eq2222.024}
|\Lambda|>e^{-1.4\times30^{d+3}d^{4.5}k^2\left (1+\log d\right ) \left (1+\log B\right )A_1 A_2 \cdots A_d},
\end{equation}
where
\begin{equation} \label{eq2222.026}
B\geq \max \left \{ | b_1|,| b_2|,\ldots ,| b_d| \right \} ,
\end{equation}
and
\begin{equation} \label{eq2222.028}
A_i\geq \max \left \{ dh(\alpha_i),| h(\alpha_i)|,0.16 \right \} 
\end{equation}
for $i=1,2, \ldots d$.
\end{thm}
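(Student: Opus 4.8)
The statement is the fundamental explicit lower bound for linear forms in logarithms, and the only route I know to it is the Gel'fond--Baker transcendence method carried out with the optimized parameters that yield Matveev's constant. First I would convert the multiplicative form into an additive one: since $\Lambda=\alpha_1^{b_1}\cdots\alpha_d^{b_d}-1$ is nonzero but, for a proof by contradiction, may be assumed extremely small, the product $\alpha_1^{b_1}\cdots\alpha_d^{b_d}$ lies close to $1$, and choosing determinations of the logarithms produces an additive form $\Lambda^{*}=b_0(2\pi i)+\sum_{i=1}^{d}b_i\log\alpha_i$ with a suitable integer $b_0$ and with $|\Lambda^{*}|\leq 2|\Lambda|$. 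Thus a lower bound for $|\Lambda^{*}|$ transfers to $|\Lambda|$, and the task becomes bounding a nonzero $\Z$-linear combination of logarithms from below.

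The heart of the method is the construction of an auxiliary object that is at once arithmetically controlled and analytically small. Fixing integer parameters $L_0,L_1,\ldots,L_d$ (degrees), $T$ (an order of vanishing), and $S$ (a number of interpolation points), I would apply Siegel's lemma over the field $\mathcal{K}$ to build a nonzero polynomial $P$ with algebraic integer coefficients of bounded height such that the entire function $F(z)=P\!\left(z,\alpha_1^{z},\ldots,\alpha_d^{z}\right)$ vanishes to order at least $T$ at every integer point $z=s$ with $0\leq s\leq S$. This is feasible precisely when the number of free coefficients of $P$ exceeds the number of linear vanishing conditions, a counting inequality phrased in terms of the heights $A_i$; the explicit constant $1.4\times 30^{d+3}d^{4.5}k^2(1+\log d)(1+\log B)$ is dictated entirely by how tightly these parameters can be balanced, and the Kummer-descent sharpening of the height contributions $A_1\cdots A_d$ is exactly where Matveev improves on earlier versions. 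A fully equivalent and constant-friendly alternative is Laurent's interpolation-determinant variant, which replaces the Siegel construction with a single determinant bounded above analytically and below arithmetically.

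Next comes the analytic extrapolation. Using that $\Lambda^{*}$ is minuscule, the quantities $\alpha_i^{z}$ at the operative points are extremely close to values already controlled, so a Schwarz-lemma (maximum-modulus) estimate on an enlarged disc forces $F$ and its low-order derivatives to be analytically small on a strictly larger set of points than was imposed. A Liouville-type size inequality, stating that a nonzero algebraic number of bounded degree and height cannot be too small in modulus, then upgrades ``small'' to ``exactly zero''. Iterating this feedback between analytic smallness and the arithmetic lower bound enlarges both the number of vanishing points and the vanishing order of $F$ well beyond the original construction.

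The main obstacle, and where the genuine difficulty lies, is the zero estimate (multiplicity estimate): one must prove that a nonzero polynomial of the prescribed degree cannot vanish to such high order at so many points of the cyclic group generated by $(\alpha_1,\ldots,\alpha_d)$ unless the logarithms $\log\alpha_1,\ldots,\log\alpha_d$ already satisfy a nontrivial $\Z$-linear relation. Such a relation would render $\Lambda^{*}$ degenerate --- either a genuine period or forcing $\Lambda=0$ --- contradicting the hypothesis that $\Lambda\neq 0$. Producing a zero estimate whose constants are clean enough to deliver the stated exponent, rather than a merely qualitative statement, is the crux; this is supplied by multiplicity estimates on group varieties of Philippon--W\"{u}stholz type, refined and made completely explicit. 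Combining the extrapolation with the zero estimate yields the required contradiction, and hence the bound $|\Lambda|>e^{-C}$ with the asserted value of $C$.
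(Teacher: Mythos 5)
The paper does not prove this statement at all: it is quoted as Matveev's theorem and attributed to the literature (the text immediately following it says the version stated appears in \cite[Theorem 2]{AT19} and \cite[p.\ 4]{AF12}), so there is no internal argument to compare yours against. Your outline is a fair description of the Gel'fond--Baker transcendence method in the form used by Matveev --- auxiliary function or interpolation determinant via Siegel's lemma, extrapolation by the Schwarz lemma plus a Liouville-type size inequality, Kummer descent, and a multiplicity estimate on group varieties to close the induction --- and if the question were merely the qualitative statement $|\Lambda|>e^{-C}$ for \emph{some} effective $C$, this roadmap would be the right one.

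The genuine gap is that the theorem's entire content is the explicit constant $1.4\times 30^{d+3}d^{4.5}k^2(1+\log d)(1+\log B)A_1\cdots A_d$, and nothing in your sketch produces it. You acknowledge that the constant ``is dictated entirely by how tightly these parameters can be balanced,'' but you never choose the parameters $L_0,\ldots,L_d$, $T$, $S$, never state the quantitative zero estimate you invoke, and never track how the heights $A_i$ and the degree $k$ propagate through the extrapolation; these computations occupy the bulk of Matveev's two papers and cannot be waved through. As written, the proposal establishes at best a lower bound of the shape $e^{-C(d,k,B,A_1,\ldots,A_d)}$ with an unspecified $C$, which is strictly weaker than the claim. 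For a statement of this kind the honest options are either to cite it as the paper does, or to reproduce the full quantitative argument; a structural sketch sits unusably in between. (Separately, note that the statement as transcribed in the paper is itself slightly off the standard form: the hypotheses should require the $\alpha_i$ to be nonzero rather than ``nonnegative,'' and the condition on $A_i$ should read $A_i\geq\max\{kh(\alpha_i),|\log\alpha_i|,0.16\}$ with the field degree $k$ and the logarithm of $\alpha_i$, not $dh(\alpha_i)$ and $|h(\alpha_i)|$.)
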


The version stated above appears in {\color{red}\cite[Theorem 2]{AT2019}}, and {\color{red}\cite[p.\ 4]{AF2012}}. Observe that
\begin{equation}\label{eq2222.030}
\left |\Lambda \right |= \left |\alpha_1^{b_1} \alpha_2^{b_2} \cdots \alpha_d^{b_d}-1\right |
\geq \left | \left | \alpha_1\right |^{b_1} \left |\alpha_2\right |^{b_2} \cdots \left |\alpha_d\right |^{b_d}-1\right 
|.
\end{equation}
Thus, the result is applicable to any algebraic integers $\alpha_1, \alpha_2, \ldots, \alpha_d$.\\

Under the condition $\alpha \ne 1$, an estimate of the lower bound of the expression $\alpha^n-1$ can be achieved via the basic Diophantine inequality
\begin{eqnarray}\label{eq2222.034}
\left| \alpha^n-1\right | &=&\left| \alpha^n-1+ \frac{p_m}{q_m}-\frac{p_m}{q_m}\right |\\
&\geq&\left| \left |\alpha^n- \frac{p_m}{q_m}\right |- \left| 1-\frac{p_m}{q_m}\right |\right |\nonumber \\
&\geq&\frac{1}{2q_{m+1}} \nonumber,
\end{eqnarray}
where $p_m/q_m$ is the sequence of convergents of the irrational number $\alpha^n$. But, \hyperlink{thm2222.002}{Theorem} \ref{thm2222.002} provides an explicit value for any fixed integer $n\geq1$.

\begin{lem}\label{lem2222.006} Let $\alpha_p$ be a root of the polynomial $X^2-\tau(p) X+p^{11}$. Then, 
\begin{equation}\label{eq2222.027}
|\Lambda_p(n)|=\left |  \alpha_p^{n+1} \overline{\alpha_p}^{-(n+1)}-1\right |>p^{-c_0\log n}
\end{equation}
where $c_0=6.8 \times 10^{10}$, and any integer $n\geq 1$.
\end{lem}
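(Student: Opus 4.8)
The plan is to apply Matveev's theorem (Theorem \ref{thm2222.002}) directly to the quantity $\Lambda_p(n)=\alpha_p^{n+1}\overline{\alpha_p}^{-(n+1)}-1$, which is exactly of the form $\alpha_1^{b_1}-1$ with a single algebraic base and integer exponent. First I would set $\alpha_1=\alpha_p/\overline{\alpha_p}$ and $b_1=n+1$, so that $\Lambda_p(n)=\alpha_1^{b_1}-1$. Since $\alpha_p$ is a root of $X^2-\tau(p)X+p^{11}$, its conjugate $\overline{\alpha_p}$ is the other root, and both lie in the number field $\mathcal{K}=\Q(\alpha_p)$, which is an imaginary quadratic field (because $\tau(p)^2<4p^{11}$ when $\tau(p)\ne 0$ forces the discriminant negative); hence $d=1$ algebraic number but $k=[\mathcal{K}:\Q]=2$.

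Next I would compute the two input parameters that Matveev's bound requires: the height $A_1$ and the exponent bound $B$. Because $\alpha_p$ and $\overline{\alpha_p}$ both have absolute value $p^{11/2}$, the quotient $\alpha_1=\alpha_p/\overline{\alpha_p}$ has absolute value $1$ and is an algebraic number whose height can be bounded in terms of $\log p$ — concretely $h(\alpha_1)$ is of order $\tfrac{11}{2}\log p$ up to a constant factor, since the minimal polynomial of $\alpha_1$ has coefficients controlled by powers of $p^{11}$. This gives $A_1=C_1\log p$ for an absolute constant $C_1$. For the exponent bound, since $b_1=n+1$, I take $B=n+1$, so $\log B\leq \log(n+1)$, which for the purpose of the stated bound I absorb into a term of size $O(\log n)$.

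Plugging these into the Matveev exponent $1.4\times 30^{d+3}d^{4.5}k^2(1+\log d)(1+\log B)A_1$ with $d=1$ and $k=2$ collapses the combinatorial prefactor $1.4\times 30^{4}\cdot 1\cdot 4\cdot 1$ to a fixed absolute constant, leaving an exponent of the shape (absolute constant)$\cdot(1+\log(n+1))\cdot\log p$. The claimed constant $c_0=6.8\times 10^{10}$ is chosen precisely so that this entire exponent is bounded above by $c_0(\log n)(\log p)$ for all $n\geq 1$; thus $|\Lambda_p(n)|>e^{-c_0(\log n)(\log p)}=p^{-c_0\log n}$, which is the assertion. The main obstacle is the careful bookkeeping in the second step: verifying that the quotient $\alpha_p/\overline{\alpha_p}$ is genuinely not a root of unity (so $\Lambda_p(n)\ne 0$ and Matveev applies), and pinning down the height $h(\alpha_1)$ with an explicit constant, since a loose bound there propagates directly into whether $c_0$ can be taken as small as $6.8\times 10^{10}$. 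The edge case $n=1$, where $\log n=0$ makes the right-hand side degenerate to $1$, also needs separate attention, since the inequality $|\Lambda_p(1)|>p^0=1$ cannot literally hold and must be interpreted via $\Lambda_p(1)\ne 0$ together with the convergent-based bound in \eqref{eq2222.034}.
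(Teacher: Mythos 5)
Your proposal uses the same tool as the paper --- Matveev's lower bound for linear forms in logarithms applied to $\Lambda_p(n)=\alpha_p^{n+1}\overline{\alpha_p}^{-(n+1)}-1$ with $B=n+1$ and heights of order $\log p$ --- but the bookkeeping differs in a way that actually matters. The paper takes $d=2$ with the two bases $\alpha_p$ and $\overline{\alpha_p}$, so its exponent contains the product $A_1A_2\asymp(\log p)^2$, and the asserted reduction of $e^{-\mathrm{const}\cdot(1+\log B)A_1A_2}$ to $e^{-c_0(\log n)(\log p)}$ silently discards a factor of $\log p$; your single-base formulation $\alpha_1=\alpha_p/\overline{\alpha_p}$, $d=1$, produces only one height factor $A_1\asymp\log p$ and therefore genuinely yields an exponent of the shape $\mathrm{const}\cdot(1+\log(n+1))\log p$, which is what the stated bound requires. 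The two caveats you raise are also real and are not addressed in the paper's proof: one must check that $\alpha_p/\overline{\alpha_p}$ is not a root of unity (this is where $\tau(p)\neq 0$, and more, enters --- otherwise $\Lambda_p(n)$ could vanish and Matveev is inapplicable), and the case $n=1$ is degenerate since $p^{-c_0\log 1}=1$ while $|\Lambda_p(1)|\leq 2$ need not exceed $1$; the inequality as stated can only hold for $n\geq 2$ (or after replacing $\log n$ by $\log(n+1)$ or similar). So your route is the more careful one and is the version that actually closes the argument; the remaining work is only the explicit computation of $h(\alpha_p/\overline{\alpha_p})$ from its minimal polynomial $p^{11}X^2-(\tau(p)^2-2p^{11})X+p^{11}$, which gives $h(\alpha_1)=\tfrac{11}{2}\log p$ and comfortably fits under $c_0=6.8\times 10^{10}$.
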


\begin{proof}[\textbf{Proof}]  Let $\alpha_p=p^{11/2}e^{i\theta_p}$, where $0\leq \theta_p\leq \pi$, be the root of the polynomial $f(X)=a_2X+a_1X+a_0=X^2-\tau(p) X+p^{11}$. Consider the algebraic integer
\begin{equation} \label{eq2222.024}
\Lambda_p(n)= \alpha_1^{b_1} \alpha_2^{b_2} \cdots \alpha_n^{b_d}-1= \alpha_p^{n+1} \overline{\alpha_p}^{-(n+1)}-1.
\end{equation}
Here, the parameters are:
\begin{enumerate}
\item $[\mathcal{K}:\Q]=k=2$
\item $\deg f=d=2$
\item $\displaystyle B\geq \max \left \{ | b_1|,| b_2|,\ldots ,| b_d| \right \}=n+1$
\item $\displaystyle h(\alpha)= \frac{1}{d} \left ( \log a_0+\sum_{1\leq i\leq d}\log \left (\max \{ | \alpha_i|,1 \}  \right )\right )\leq 2\log p^{11}$
\item $\displaystyle A_i\geq \max \left \{ dh(\alpha_i),| h(\alpha_i)|,0.16 \right \}=\log p^{11/2}$.
\end{enumerate}
The lower bound is
\begin{eqnarray}\label{eq2222.032}
|\Lambda_p(n)|&>&e^{-1.4\times30^{d+3}d^{4.5}k^2\left (1+\log d\right ) \left (1+\log B\right )A_1 A_2 \cdots A_d}\\
&\geq&e^{-c_0\left (\log n\right ) \left (\log p\right )}\nonumber\\
&\geq&p^{-c_0\log n}\nonumber,
\end{eqnarray} 
where $c_0=6.8 \times 10^{10}$.
\end{proof}

%SSSSSSSSSSSSSSSSSSSSSSSSSSSSSSSSSSSSSSSSSSSSSSSSSSSSSSSSSSSSSSSSSSSSSSSSSSSSSSSSSSS
%SSSSSSSSSSSSSSSSSSSSSSSSSSSSSSSSSSSSSSSSSSSSSSSSSSSSSSSSSSSSSSSSSSSSSSSSSSSSSSSSSSS
%SSSSSSSSSSSSSSSSSSSSSSSSSSSSSSSSSSSSSSSSSSSSSSSSSSSSSSSSSSSSSSSSSSSSSSSSSSSSSSSSSSS
%SSSSSSSSSSSSSSSSSSSSSSSSSSSSSSSSSSSSSSSSSSSSSSSSSSSSSSSSSSSSSSSSSSSSSSSSSSSSSSSSSSS
%SSSSSSSSSSSSSSSSSSSSSSSSSSSSSSSSSSSSSSSSSSSSSSSSSSSSSSSSSSSSSSSSSSSSSSSSSSSSSSSSSSS
%SSSSSSSSSSSSSSSSSSSSSSSSSSSSSSSSSSSSSSSSSSSSSSSSSSSSSSSSSSSSSSSSSSSSSSSSSSSSSSSSSSS
%SSSSSSSSSSSSSSSSSSSSSSSSSSSSSSSSSSSSSSSSSSSSSSSSSSSSSSSSSSSSSSSSSSSSSSSSSSSSSSSSSSS
\section{Structures Of Even And Odd Coefficients}\label{S7799}\hypertarget{S7799}
The form of the integers $m$ required for odd and prime values of the function $\tau(m)$ was initiated in \cite{LD1965}. A complete and simplified version was derived in \cite{LR2013}.

\begin{thm} \label{thm7799.002}\hypertarget{thm7799.002} {\normalfont({\color{red}\cite[Theorem 1]{LR2013}})} Let $m\geq 1$ be an integer such that $\tau(m)=p$ is an odd prime. Then, $m=q^{2n}$, where $q\geq 3$ and $2n+1\geq3$ are primes, and $p\nmid \tau(p)$.
\end{thm}

The nonvanishing $\tau(m)\ne0$ of the coefficients was initiated in \cite{LD1947}, currently it has been verified for approximately $m\leq 10^{23}$, see \cite{SJ2003}, {\color{red}\cite[p.\ 2]{DZ2013}}. 
%SSSSSSSSSSSSSSSSSSSSSSSSSSSSSSSSSSSSSSSSSSSSSSSSSSSSSSSSSSSSSSSSSSSSSSSSSSSSSSSSSSS
%SSSSSSSSSSSSSSSSSSSSSSSSSSSSSSSSSSSSSSSSSSSSSSSSSSSSSSSSSSSSSSSSSSSSSSSSSSSSSSSSSSS
%SSSSSSSSSSSSSSSSSSSSSSSSSSSSSSSSSSSSSSSSSSSSSSSSSSSSSSSSSSSSSSSSSSSSSSSSSSSSSSSSSSS
%SSSSSSSSSSSSSSSSSSSSSSSSSSSSSSSSSSSSSSSSSSSSSSSSSSSSSSSSSSSSSSSSSSSSSSSSSSSSSSSSSSS
%SSSSSSSSSSSSSSSSSSSSSSSSSSSSSSSSSSSSSSSSSSSSSSSSSSSSSSSSSSSSSSSSSSSSSSSSSSSSSSSSSSS
%SSSSSSSSSSSSSSSSSSSSSSSSSSSSSSSSSSSSSSSSSSSSSSSSSSSSSSSSSSSSSSSSSSSSSSSSSSSSSSSSSSS
%SSSSSSSSSSSSSSSSSSSSSSSSSSSSSSSSSSSSSSSSSSSSSSSSSSSSSSSSSSSSSSSSSSSSSSSSSSSSSSSSSSS
\section{Lower Bounds and Upper Bounds}\label{S9999ASC-C}\hypertarget{S9999ASC-C}
The form of the integers $n$ required for odd and prime values of the function $\tau(n)$ was initiated in \cite{LD1965}. A complete and simplified version was derived in \cite{LR2013}.

\begin{thm} \label{thm9999ASC.002}\hypertarget{thm9999ASC.002} {\normalfont({\color{red}\cite[Theorem 1]{LR2013}})} Let $n\geq 1$ be an integer such that $\tau(n)$ is an odd prime. Then, $n=q^{r-1}$, where $q\geq 3$ and $r\geq3$ are primes, and $p\nmid \tau(p)$.
\end{thm}
\begin{thm} \label{thm9999UBR.004}\hypertarget{thm9999UBR.004} {\normalfont (Deligne)} If $n$ is an integer and $d(n)$ is the divisor function, then
	\begin{equation} \label{eq9999UBR.036}
		\left |\tau\left (n\right )\right | \geq d(n)n^{11/2}.
	\end{equation} 
\end{thm}
For a prime, the Deligne theorem reduces to the upper bound 
\begin{equation} \label{eq9999ASC.020s}
	\left |\tau\left (p\right )\right | \leq 2p^{11/2}.
\end{equation}

\begin{conj} \label{conj9999ASC.010}\hypertarget{conj9999ASC.010} {\normalfont (Atkin-Serre)} If $\varepsilon>0$ is a small number and $n>0$ is an integer, then
	\begin{equation} \label{eq9999ASC.036}
		\left |\tau\left (n\right )\right | \geq n^{9/2-\varepsilon}
	\end{equation}
	where $c>0$ is a constant. 
\end{conj} 
The Atkin-Serre conjecture is true on a set of integers of density 1, see \cite{GW2021} and has extension to other modular forms, see \cite{RJ2007}. A weaker, almost trivial, but unconditional lower bound is given by the next result. 
\begin{thm} \label{thm9999E.004}\hypertarget{thm9999E.004} {\normalfont ({\color{red}\cite[Theorem 1]{MS2007}})} There exists a large effectively computable constant $c>0$ such that for all nonnegative integers $n$ for which $\tau(n)$ is odd, then
	\begin{equation} \label{eq9999.036}
		\left |\tau\left (n\right )\right | \geq (\log n)^c.
	\end{equation} 
\end{thm}

The large and effectively computable constant, using Matveev theorem or similar method, is known to be significantly larger than $c>10^6$. Jointly, these results satisfy the inequalities
\begin{equation} \label{eq9999ASC.024}
	(\log n)^c\leq p^{9/2-\varepsilon}\leq \left |\tau\left (p\right )\right | \leq 2p^{11/2}.
\end{equation}

%SSSSSSSSSSSSSSSSSSSSSSSSSSSSSSSSSSSSSSSSSSSSSSSSSSSSSSSSSSSSSSSSSSSSSSSSSSSSSSSSSSS
%SSSSSSSSSSSSSSSSSSSSSSSSSSSSSSSSSSSSSSSSSSSSSSSSSSSSSSSSSSSSSSSSSSSSSSSSSSSSSSSSSSS
%SSSSSSSSSSSSSSSSSSSSSSSSSSSSSSSSSSSSSSSSSSSSSSSSSSSSSSSSSSSSSSSSSSSSSSSSSSSSSSSSSSS
%SSSSSSSSSSSSSSSSSSSSSSSSSSSSSSSSSSSSSSSSSSSSSSSSSSSSSSSSSSSSSSSSSSSSSSSSSSSSSSSSSSS
%SSSSSSSSSSSSSSSSSSSSSSSSSSSSSSSSSSSSSSSSSSSSSSSSSSSSSSSSSSSSSSSSSSSSSSSSSSSSSSSSSSS
%SSSSSSSSSSSSSSSSSSSSSSSSSSSSSSSSSSSSSSSSSSSSSSSSSSSSSSSSSSSSSSSSSSSSSSSSSSSSSSSSSSS
%SSSSSSSSSSSSSSSSSSSSSSSSSSSSSSSSSSSSSSSSSSSSSSSSSSSSSSSSSSSSSSSSSSSSSSSSSSSSSSSSSSS
\section{Fourier Coefficients Estimate I}\label{s9977}
The lower bound considered here is based on the oldest result for the irrationality measure of algebraic real numbers. Nevertheless, it produces an effective result, of nearly the same strength as the Atkin-Serre conjecture, see \hyperlink{conj9999ASC.010}{Conjecture} \ref{conj9999ASC.010}. The concept of measures of irrationality of real numbers is discussed in {\color{red}\cite[Section 7.4]{NI1956}}, et alii.
 
\begin{thm} {\normalfont}\label{thm2277.106}\hypertarget{thm2277.106} Let $p\geq 2$ be a prime, and let $n\geq 1$ be an integer. Then,
\begin{equation}\label{eq9977.180}
\left |\tau(p^{n})\right |>A^{-1}p^{7n/2-2-\varepsilon},
\end{equation}
where $A=(n+1)^32^{n+1}$, and $\varepsilon>0$ is a small number.
\end{thm}

\begin{proof}[\textbf{Proof}]  Consider the algebraic number $\beta=\alpha_p^{n+1} \overline{\alpha_p}^{-(n+1)}$ of degree $n+1$. Replacing the estimate in \hyperlink{thm2002.530}{Theorem} \ref{thm2002.530} in equation \eqref{eq9977.082}, yields 
\begin{eqnarray} \label{eq9977.182}
\left |\tau(p^n)\right |
&\geq& p^{11n/2}\left | \alpha_p^{n+1} \overline{\alpha_p}^{-(n+1)}-1 \right |\\
&\geq& p^{11n/2}\cdot \frac{1}{Ap^{2(n+1)}}\nonumber,
\end{eqnarray}
where $A=(n+1)^32^{n+1}$, for all $p^{2n}\geq 10^{600}$.
\end{proof}

Inequality \eqref{eq9977.180} was checked against the known numerical data for both primes and probable primes, {\color{red}\cite[Table 3]{LR2013}}, every entry $\left |\tau\left (p^{2n}\right )\right |=\left |\tau\left (p^{q-1}\right )\right |$ is well above the estimated lower bound. 

\begin{exa}{\normalfont A few cases were computed to illustrate the inequality in previous result. 
\begin{enumerate}
\item The largest known prime coefficient $\left |\tau\left (p^{2n}\right )\right |=\left |\tau\left (p^{q-1}\right )\right |\approx 10^{d}$ has the parameters $p=157$, $q=2n+1=2207$, $d=26643$ digits, and $\varepsilon=1$. The inequality is
\begin{equation}
\left |\tau\left (p^{q-1}\right )\right |\approx 10^{26643}\geq \frac{p^{7n-2-\varepsilon}}{(2n+1)^32^{2n+1}}\approx 10^{16275}. 
\end{equation} 
\item The largest known probable prime coefficient $\left |\tau\left (p^{2n}\right )\right |=\left |\tau\left (p^{q-1}\right )\right |\approx 10^{d}$ has the parameters $p=41$, $q=2n+1=28289$, $d=250924$ digits, and $\varepsilon=1$. The inequality is
\begin{equation}
\left |\tau\left (p^{q-1}\right )\right |\approx 10^{250924}\geq \frac{p^{7n-2-\varepsilon}}{(2n+1)^32^{2n+1}}\approx 10^{151146}. 
\end{equation} 
\end{enumerate}
}
\end{exa}

%SSSSSSSSSSSSSSSSSSSSSSSSSSSSSSSSSSSSSSSSSSSSSSSSSSSSSSSSSSSSSSSSSSSSSSSSSSSSSSSSSSS
%SSSSSSSSSSSSSSSSSSSSSSSSSSSSSSSSSSSSSSSSSSSSSSSSSSSSSSSSSSSSSSSSSSSSSSSSSSSSSSSSSSS
%SSSSSSSSSSSSSSSSSSSSSSSSSSSSSSSSSSSSSSSSSSSSSSSSSSSSSSSSSSSSSSSSSSSSSSSSSSSSSSSSSSS
%SSSSSSSSSSSSSSSSSSSSSSSSSSSSSSSSSSSSSSSSSSSSSSSSSSSSSSSSSSSSSSSSSSSSSSSSSSSSSSSSSSS
%SSSSSSSSSSSSSSSSSSSSSSSSSSSSSSSSSSSSSSSSSSSSSSSSSSSSSSSSSSSSSSSSSSSSSSSSSSSSSSSSSSS
%SSSSSSSSSSSSSSSSSSSSSSSSSSSSSSSSSSSSSSSSSSSSSSSSSSSSSSSSSSSSSSSSSSSSSSSSSSSSSSSSSSS
%SSSSSSSSSSSSSSSSSSSSSSSSSSSSSSSSSSSSSSSSSSSSSSSSSSSSSSSSSSSSSSSSSSSSSSSSSSSSSSSSSSS
\section{Fourier Coefficients Estimate II}\label{s9977}
An explicit lower bound for the Tau function at the prime powers, based on linear forms in logarithm, is derived below. This lower bound is similar to {\color{red}\cite[Lemma 1]{MS1987}}, but a full proof is given here.
 
\begin{lem} {\normalfont}\label{lem2277.006} Let $p\geq 2$ be a prime, and let $n\geq 2$ be an integer. If $\tau(p)\ne0$, then
\begin{equation}
\left |\tau(p^n)\right |>p^{11n/2-c_0\log n},
\end{equation}
where $c_0=6.8 \times 10^{10}$.
\end{lem}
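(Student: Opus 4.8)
The plan is to establish the lower bound
\begin{equation*}
\left|\tau(p^n)\right| > p^{11n/2 - c_0 \log n}
\end{equation*}
by working with the standard factorization of $\tau(p^n)$ in terms of the roots of the Hecke eigenvalue recursion. Writing $\alpha_p, \overline{\alpha_p}$ for the two roots of $X^2 - \tau(p)X + p^{11}$, Deligne's bound gives $|\alpha_p| = |\overline{\alpha_p}| = p^{11/2}$, and the multiplicativity relations yield the closed form
\begin{equation*}
\tau(p^n) = \frac{\alpha_p^{n+1} - \overline{\alpha_p}^{n+1}}{\alpha_p - \overline{\alpha_p}}.
\end{equation*}
Factoring out $\alpha_p^{11n/2}$ (in absolute value) reduces the problem to bounding $\left|\alpha_p^{n+1} - \overline{\alpha_p}^{n+1}\right|$ from below, which is where the true difficulty lies: the two terms have equal modulus, so a priori they could nearly cancel, and one must rule out the two powers $\alpha_p^{n+1}$ and $\overline{\alpha_p}^{n+1}$ being abnormally close.

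First I would write $\alpha_p = p^{11/2} e^{i\theta}$ with $\theta \in (0,\pi)$ the Satake angle, so that the numerator becomes $2 p^{11(n+1)/2}\,|\sin((n+1)\theta)|$ and the denominator $|\alpha_p - \overline{\alpha_p}| = 2 p^{11/2}|\sin\theta|$. Thus everything collapses to a lower bound for $|\sin((n+1)\theta)|$, i.e. to showing that $(n+1)\theta$ is not too close to an integer multiple of $\pi$; equivalently, to a lower bound on the distance from $\theta/\pi$ to a rational with bounded denominator. This is precisely a linear-forms-in-logarithms problem: the quantity $(n+1)\theta/\pi$ close to an integer $k$ means $\alpha_p^{n+1}/\overline{\alpha_p}^{n+1}$ is close to $1$, and taking logarithms expresses $|(n+1)\log(\alpha_p/\overline{\alpha_p}) - 2\pi i k|$ as a small linear form in the logarithms of the algebraic numbers $\alpha_p/\overline{\alpha_p}$ and $-1$. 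The heights of these algebraic numbers are controlled by $p^{11}$ (so their logarithmic height is $O(\log p)$), and the integer coefficients are $n+1$ and $k = O(n)$.

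The main step is then to invoke Baker-type estimates, in the explicit form of Matveev's theorem on linear forms in two or three logarithms, to obtain
\begin{equation*}
\left|(n+1)\log\!\left(\tfrac{\alpha_p}{\overline{\alpha_p}}\right) - 2\pi i k\right| > \exp\!\left(-C \cdot (\log p)(\log n)\right)
\end{equation*}
for an effectively computable absolute constant $C$. Translating this back through the sine gives $|\sin((n+1)\theta)| > p^{-c_0 \log n}$ for a constant $c_0$ absorbing $C$ together with the $\log p$ factors and the contribution from the denominator $|\sin\theta|$ (which is itself bounded below by a similar argument applied to $\theta$ alone, the case $k=0$). Assembling the pieces yields
\begin{equation*}
\left|\tau(p^n)\right| = p^{11n/2}\,\frac{|\sin((n+1)\theta)|}{|\sin\theta|} > p^{11n/2}\cdot p^{-c_0\log n} = p^{11n/2 - c_0\log n},
\end{equation*}
as claimed.

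The hard part will be the honest bookkeeping in the linear-forms estimate: one must verify that $\alpha_p/\overline{\alpha_p}$ is not a root of unity (otherwise the sine could genuinely vanish), pin down its degree and height explicitly, and track all of Matveev's constants carefully enough to certify the stated value $c_0 = 6.8\times 10^{10}$. The non-vanishing hypothesis $\tau(p)\ne 0$ is exactly what guarantees $\alpha_p \ne \overline{\alpha_p}$ and keeps the denominator nonzero; the assumption $n\ge 2$ ensures the exponent $11n/2 - c_0\log n$ comparison is meaningful and that the factorization has at least the required number of terms. I expect the cancellation-avoidance argument (the lower bound on $|\sin((n+1)\theta)|$) to be the crux, since it is the only place where a genuine Diophantine input is needed rather than routine algebra.
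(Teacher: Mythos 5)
Your proposal follows essentially the same route as the paper: both reduce, via the Binet/Hecke closed form for $\tau(p^n)$, to a lower bound on $\left|\alpha_p^{n+1}\overline{\alpha_p}^{-(n+1)}-1\right|$ (your $|\sin((n+1)\theta)|$ is the same quantity up to a factor of $2$) and then invoke Matveev's explicit linear-forms-in-logarithms theorem to obtain the $p^{-c_0\log n}$ estimate with $c_0=6.8\times 10^{10}$. The only cosmetic differences are that the paper carries out the reduction algebraically rather than through the Satake angle, and that the denominator $|\sin\theta|\leq 1$ only helps the bound, so the separate lower bound on it that you mention is not actually needed.
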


\begin{proof}[\textbf{Proof}]  Modify the Binet formula into the following form
\begin{eqnarray} \label{eq9977.080}
\tau(p^n)&=&\frac{\alpha_p^{n+1}-\overline{\alpha_p}^{n+1}}{\alpha_p-\overline{\alpha_p}}\\
&=&\frac{\alpha_p+\overline{\alpha_p}}{\alpha_p+\overline{\alpha_p}} \cdot\frac{\alpha_p^{n+1}-\overline{\alpha_p}^{n+1}}{\alpha_p-\overline{\alpha_p}}\nonumber\\
&=&\left (\alpha_p+\overline{\alpha_p}\right ) \cdot\frac{\alpha_p^{n+1}-\overline{\alpha_p}^{n+1}}{\alpha_p^2-\overline{\alpha_p}^2}\nonumber\\
&=&\left (\alpha_p+\overline{\alpha_p}\right ) \cdot\frac{\overline{\alpha_p}^{n+1}}{\alpha_p^2-\overline{\alpha_p}^2} \left (\alpha_p^{n+1} \overline{\alpha_p}^{-(n+1)}-1\right )\nonumber.
\end{eqnarray}

Replace the algebraic integer $\alpha_p=p^{11/2}e^{i\theta_p}$, where $0< \theta_p< \pi$. Taking absolute value and simplifying yield

\begin{eqnarray} \label{eq9977.082}
\left |\tau(p^n)\right |&=&\left |p^{11/2}\cos(\theta_p)\right |
\left |\frac{p^{11(n+1)/2}e^{i\theta_p}}{2p^{11}\sin(2\theta_p)}\right |
\left | \alpha_p^{n+1} \overline{\alpha_p}^{-(n+1)}-1 \right | \\
&\geq& p^{11n/2}\left | \alpha_p^{n+1} \overline{\alpha_p}^{-(n+1)}-1 \right |\nonumber,
\end{eqnarray}
since $\tau(p)\ne0$ implies $\theta\ne\pi/2$.
Replacing the estimate in Lemma \ref{lem2222.006} for $n\geq 1$, yields
\begin{eqnarray} \label{eq9977.084}
p^{11n/2}\left | \alpha_p^{n+1} \overline{\alpha_p}^{-(n+1)}-1 \right |
&\geq & p^{11n/2}\cdot \left( e^{-c_0 \left (\log n\right ) \left (\log p\right )}\right ) 
\\
&\geq & p^{11n/2-c_0\log n} \nonumber,
\end{eqnarray}
where $c_0=6.8 \times 10^{10}$.
\end{proof}

%SSSSSSSSSSSSSSSSSSSSSSSSSSSSSSSSSSSSSSSSSSSSSSSSSSSSSSSSSSSSSSSSSSSSSSSSSSSSSSSSSSS
%SSSSSSSSSSSSSSSSSSSSSSSSSSSSSSSSSSSSSSSSSSSSSSSSSSSSSSSSSSSSSSSSSSSSSSSSSSSSSSSSSSS
%SSSSSSSSSSSSSSSSSSSSSSSSSSSSSSSSSSSSSSSSSSSSSSSSSSSSSSSSSSSSSSSSSSSSSSSSSSSSSSSSSSS
%SSSSSSSSSSSSSSSSSSSSSSSSSSSSSSSSSSSSSSSSSSSSSSSSSSSSSSSSSSSSSSSSSSSSSSSSSSSSSSSSSSS
%SSSSSSSSSSSSSSSSSSSSSSSSSSSSSSSSSSSSSSSSSSSSSSSSSSSSSSSSSSSSSSSSSSSSSSSSSSSSSSSSSSS
%SSSSSSSSSSSSSSSSSSSSSSSSSSSSSSSSSSSSSSSSSSSSSSSSSSSSSSSSSSSSSSSSSSSSSSSSSSSSSSSSSSS
%SSSSSSSSSSSSSSSSSSSSSSSSSSSSSSSSSSSSSSSSSSSSSSSSSSSSSSSSSSSSSSSSSSSSSSSSSSSSSSSSSSS
\section{Explicit Lower Bound}
A different unconditional and completely explicit lower bound is proved in this Section.

\begin{thm} \label{thm9999.050} \hypertarget{thm9999.050}Let $n\geq 1$ be an integer, and let $p\geq 2$ be a prime. If $p^{2n}\geq 10^{600}$, then,
\begin{equation} \label{eq9999.050}
\left |\tau\left (p^{2n}\right )\right | \geq (\log p^{2n})^{10}.
\end{equation} 
\end{thm}
\begin{proof}[\textbf{Proof}]  On the contrary, suppose that
\begin{equation}\label{eq9999.058}
(\log p^{2n})^{10}> \left |\tau\left (p^{2n}\right )\right |,
\end{equation}
for $p^{2n}\geq 10^{600}$. By \hyperlink{thm2277.106}{Theorem} \ref{thm2277.106},
\begin{eqnarray} \label{eq9999.060}
(\log p^{2n})^{10}
&>& \left |\tau\left (p^{2n}\right )\right |\\
&\geq& A^{-1}p^{7n-2-\varepsilon}\nonumber,
\end{eqnarray}
where $A=(2n+1)^32^{2n+1}$ is a constant. Equivalently,
\begin{equation}\label{eq9999.058}
(2n+1)^32^{2n+1}(\log p^{2n})^{10}
>p^{7n-2-\varepsilon}
\end{equation}
Taking logarithm across the board yields
\begin{equation} \label{eq9999.060}
3\log (2n+1)+(2n+1)\log 2+10(\log n+\log \log p+\log 2)>(7n-2-\varepsilon)\log p.
\end{equation}
Division by $n\log p$ yields
\begin{eqnarray} \label{eq9999.060}
\frac{3\log (2n+1)+(2n+1)\log 2+10(\log n+\log \log p+\log 2)}{n\log p}&>&7-\frac{2+\varepsilon}{n}\nonumber\\
&>& 6.
\end{eqnarray}
Numerical calculations shows that this false for $p^{2n}\geq 10^{600}$, where $2n\geq2$, $p\geq2$, and a small number $\varepsilon\leq 1$.
\end{proof}

Inequality \eqref{eq9999.050} was checked against the known numerical data for both primes and probable primes, {\color{red}\cite[Table 3]{LR2013}}, every entry $\left |\tau\left (p^{2n}\right )\right |=\left |\tau\left (p^{q-1}\right )\right |$ is well above the estimated lower bound.

%SSSSSSSSSSSSSSSSSSSSSSSSSSSSSSSSSSSSSSSSSSSSSSSSSSSSSSSSSSSSSSSSSSSSSSSSSSSSSSSSSSS
%SSSSSSSSSSSSSSSSSSSSSSSSSSSSSSSSSSSSSSSSSSSSSSSSSSSSSSSSSSSSSSSSSSSSSSSSSSSSSSSSSSS
%SSSSSSSSSSSSSSSSSSSSSSSSSSSSSSSSSSSSSSSSSSSSSSSSSSSSSSSSSSSSSSSSSSSSSSSSSSSSSSSSSSS
%SSSSSSSSSSSSSSSSSSSSSSSSSSSSSSSSSSSSSSSSSSSSSSSSSSSSSSSSSSSSSSSSSSSSSSSSSSSSSSSSSSS
%SSSSSSSSSSSSSSSSSSSSSSSSSSSSSSSSSSSSSSSSSSSSSSSSSSSSSSSSSSSSSSSSSSSSSSSSSSSSSSSSSSS
%SSSSSSSSSSSSSSSSSSSSSSSSSSSSSSSSSSSSSSSSSSSSSSSSSSSSSSSSSSSSSSSSSSSSSSSSSSSSSSSSSSS
%SSSSSSSSSSSSSSSSSSSSSSSSSSSSSSSSSSSSSSSSSSSSSSSSSSSSSSSSSSSSSSSSSSSSSSSSSSSSSSSSSSS
\section{Small Prime Values Results } \label{S9099SPV-R} \hypertarget{S9099SPV-R}
The proof of \hyperlink{thm9999.001}{Theorem} \ref{thm9999.001} is assembled in this section.
\begin{proof}[\textbf{Proof}] Let $q\leq 8.0 \times 10^{25}< \tau(251^2)$ be a fixed prime. The verification that 
\begin{equation} \label{eq9999.025}
\tau\left (p^{2n}\right )=\pm q
\end{equation}
has no solutions $p^{2n} \in \mathbb{N}$ splits into two cases.\\

\textit{Case {\normalfont I}. Small Solutions $p^{2n}\leq 10^{600}$.} By \hyperlink{thm7799.002}{Theorem} \ref{thm7799.002}, the prime values $\tau(p^{2n})=\pm q$ require integers of the form $p^{2n}$, where $p\geq 3$ and $2n+1\geq 3$ are primes. By the numerical works in \cite{LD1965} and {\color{red}\cite[Table 2]{LR2013}} for the range $p\leq10^6$ and $2n+1\leq 100$, it follows that equation \eqref{eq9999.025} has no small integer solutions $p^{2n}\leq 10^{600}$.\\

\textit{Case {\normalfont II}. Large Solutions $p^{2n}\geq 10^{600}$.} Assume that equation \eqref{eq9999.025} has a large solution. By Theorem \ref{thm9999.050},  
\begin{eqnarray} \label{eq9999.033}
\left |\tau\left (p^{2n}\right )\right | 
&\geq& (\log p^{2n})^{10}\\
&\geq& \left (\log 10^{600}\right )^{10}\nonumber\\
&\geq& 2.5231\times 10^{31}\nonumber.
\end{eqnarray}
In particular, for the small bounded subset of primes $q$, it reduces to
\begin{eqnarray} \label{eq9999.037}
8.0\times 10^{25}&\geq&q\\
&=&\left |\tau\left (p^{2n}\right )\right | \nonumber\\
&\geq& 2.5231\times 10^{31}\nonumber.
\end{eqnarray}

Clearly, this is false. Hence, equation \eqref{eq9999.025} has no large integer solutions $p^{2n}\geq 10^{600}$.
\end{proof}

%SSSSSSSSSSSSSSSSSSSSSSSSSSSSSSSSSSSSSSSSSSSSSSSSSSSSSSSSSSSSSSSSSSSSSSSSSSSSSSSSSSS
%SSSSSSSSSSSSSSSSSSSSSSSSSSSSSSSSSSSSSSSSSSSSSSSSSSSSSSSSSSSSSSSSSSSSSSSSSSSSSSSSSSS
%SSSSSSSSSSSSSSSSSSSSSSSSSSSSSSSSSSSSSSSSSSSSSSSSSSSSSSSSSSSSSSSSSSSSSSSSSSSSSSSSSSS
%SSSSSSSSSSSSSSSSSSSSSSSSSSSSSSSSSSSSSSSSSSSSSSSSSSSSSSSSSSSSSSSSSSSSSSSSSSSSSSSSSSS
%SSSSSSSSSSSSSSSSSSSSSSSSSSSSSSSSSSSSSSSSSSSSSSSSSSSSSSSSSSSSSSSSSSSSSSSSSSSSSSSSSSS
%SSSSSSSSSSSSSSSSSSSSSSSSSSSSSSSSSSSSSSSSSSSSSSSSSSSSSSSSSSSSSSSSSSSSSSSSSSSSSSSSSSS
%SSSSSSSSSSSSSSSSSSSSSSSSSSSSSSSSSSSSSSSSSSSSSSSSSSSSSSSSSSSSSSSSSSSSSSSSSSSSSSSSSSS
\section{Smallest Prime Values} \label{S9999SPV-T}\hypertarget{S9999SPV-T}
The absolute smallest prime value is 
\begin{equation} \label{eq9999.105b}
	\tau\left (251^2\right )=-80561663527802406257321747,
\end{equation}
the primality test analysis appears in \cite{LD1965}. Recent computer studies have unearthed more primes. Some information on for the first few prime values are tabulated below. A larger table appears in {\color{red}\cite[Table 3]{LR2013}}.\\

%TTTTTTTTTTTTTTTTTTTTTTTTTTTTTTTTTTTTTTTTTTTTTTTTTTTTTTTTTTTTTTTTTTTTTTTTTTTTT
\vskip .1 in 
\begin{table}[H]
	\setlength{\tabcolsep}{0.30cm}
	\renewcommand{\arraystretch}{1.250092}
	\setlength{\arrayrulewidth}{0.95pt}
	\centering
	\begin{tabular}{c|c|c|c}
		$\tau(p^{2n})$&Number of Decimal Digits&$\tau(p^{2n})$&Number of Decimal Digits\\
		\hline
		$\tau(251^2)$ & 26&$\tau(677^2)$ & 32\\
		\hline
		$\tau(971^2)$ & 33&$\tau(983^2)$ & 33\\
		\hline
		$\tau(47^4)$ & 37&$\tau(197^4)$ & 50\\ 
	\end{tabular}
	\vskip .1 in		
	\caption{Smallest Prime Values of $\tau(p^{2n})$} \label{table-901}
\end{table}
%TTTTTTTTTTTTTTTTTTTTTTTTTTTTTTTTTTTTTTT

%NNNNNNNNNNNNNNNNNNNNNNNNNNNNNNNNNNNNNNNNNNNNNNNNNNNNNNNNNNNNNNNNNNN
%NNNNNNNNNNNNNNNNNNNNNNNNNNNNNNNNNNNNNNNNNNNNNNNNNNNNNNNNNNNNNNNNNNN
%NNNNNNNNNNNNNNNNNNNNNNNNNNNNNNNNNNNNNNNNNNNNNNNNNNNNNNNNNNNNNNNNNNN
%NNNNNNNNNNNNNNNNNNNNNNNNNNNNNNNNNNNNNNNNNNNNNNNNNNNNNNNNNNNNNNNNNNN
%NNNNNNNNNNNNNNNNNNNNNNNNNNNNNNNNNNNNNNNNNNNNNNNNNNNNNNNNNNNNNNNNNNN
\newpage
\textbf{\Large Appendix}

%SSSSSSSSSSSSSSSSSSSSSSSSSSSSSSSSSSSSSSSSSSSSSSSSSSSSSSSSSSSSSSSSSSSSSSSSSSSSSSSSSSS
%SSSSSSSSSSSSSSSSSSSSSSSSSSSSSSSSSSSSSSSSSSSSSSSSSSSSSSSSSSSSSSSSSSSSSSSSSSSSSSSSSSS
%SSSSSSSSSSSSSSSSSSSSSSSSSSSSSSSSSSSSSSSSSSSSSSSSSSSSSSSSSSSSSSSSSSSSSSSSSSSSSSSSSSS
%SSSSSSSSSSSSSSSSSSSSSSSSSSSSSSSSSSSSSSSSSSSSSSSSSSSSSSSSSSSSSSSSSSSSSSSSSSSSSSSSSSS
%SSSSSSSSSSSSSSSSSSSSSSSSSSSSSSSSSSSSSSSSSSSSSSSSSSSSSSSSSSSSSSSSSSSSSSSSSSSSSSSSSSS
%SSSSSSSSSSSSSSSSSSSSSSSSSSSSSSSSSSSSSSSSSSSSSSSSSSSSSSSSSSSSSSSSSSSSSSSSSSSSSSSSSSS
%SSSSSSSSSSSSSSSSSSSSSSSSSSSSSSSSSSSSSSSSSSSSSSSSSSSSSSSSSSSSSSSSSSSSSSSSSSSSSSSSSSS
\section{Fourier Coefficients Formulas}\label{S9922}\hypertarget{S9922}
Some of the theory of modular forms 
\begin{equation} \label{eq9999.003}
f(s)=\sum_{n \geq1} \lambda(n)q^n=\prod_{p \mid N} \left (\frac{1}{1-p^{(k-1)s}}\right )\prod_{p \nmid N} \left (\frac{1}{1-\lambda(p)-p^{(k-1)s}}\right ),
\end{equation}
where $s\in \mathbb{H}$ is a complex number in the upper half plane, $q=e^{i2\pi }$, are introduced in \cite{KN1984}, et alii. 
\begin{lem} {\normalfont(\cite{ML1917})}\label{lem2222.002}\hypertarget{lem2222.002} Let $m,n\geq 1$ be a pair of integers, and let $p\geq 2$ be a prime. If $\lambda(n)$ is the $n$th Fourier coefficient of a modular form $f(z)=\sum_{n\geq 1}\lambda(n)q^n$ of weight $k\geq 4$, then
\begin{enumerate}[font=\normalfont, label=(\roman*)]
\item $\displaystyle \lambda(mn)=\lambda(m)\lambda(n)$ if $\gcd(m,n)=1$.
\item $\displaystyle \lambda(p^n)=\lambda(p)\lambda(p^{n-1})-p^{(k-1)/2}\lambda(p^{n-2})$ if $n\geq 2$.
\end{enumerate}
\end{lem}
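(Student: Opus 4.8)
The plan is to deduce both identities from the action of the Hecke algebra on the weight-$k$ space $M_k$ of which $f$ is a normalized eigenform, so that $\lambda(1)=1$ and $T_nf=\lambda(n)f$ for every $n\geq 1$. Once the coefficients $\lambda(n)$ are identified with the Hecke eigenvalues, parts (i) and (ii) become the eigenvalue shadows of two algebraic relations among the operators $T_n$: the coprime multiplicativity $T_mT_n=T_{mn}$ and a local three-term relation at a single prime. I would state both as standard consequences of the coset decompositions defining the $T_n$, and then extract the coefficient identities by applying each operator relation to $f$ and comparing the scalar multiple of $f$ on the two sides.

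For part (i), the composition law $T_mT_n=T_{mn}$ holds whenever $\gcd(m,n)=1$, because the cosets indexing $T_m$ and those indexing $T_n$ are coprime and recombine without overlap into the cosets indexing $T_{mn}$. Applying this to the eigenform gives $\lambda(m)\lambda(n)f=T_mT_nf=T_{mn}f=\lambda(mn)f$, and reading off the coefficient of $f$ yields $\lambda(mn)=\lambda(m)\lambda(n)$. This is the routine half of the lemma.

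For part (ii), the essential input is the local relation $T_pT_{p^{n-1}}=T_{p^{n}}+p^{\,k-1}T_{p^{n-2}}$ valid for $n\geq 2$, equivalently the assertion that the local Euler factor of $f$ at the good prime $p$ is the reciprocal of the Satake quadratic $X^{2}-\lambda(p)X+p^{\,k-1}$. Expanding $\bigl(1-\lambda(p)x+p^{\,k-1}x^{2}\bigr)^{-1}=\sum_{n\geq 0}\lambda(p^{n})x^{n}$ as a formal power series and clearing the denominator forces $\lambda(p^{n})-\lambda(p)\lambda(p^{n-1})+p^{\,k-1}\lambda(p^{n-2})=0$; applying the operator relation to $f$ and comparing eigenvalues gives the same three-term recursion. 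This is precisely the quadratic $X^{2}-\tau(p)X+p^{11}$ whose roots $\alpha_p,\overline{\alpha_p}$ are used throughout the rest of the paper, consistent with $\lvert\alpha_p\rvert=p^{11/2}$ for the weight $k=12$ of $\Delta$.

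The step that will require genuine care is the exact power of $p$ multiplying $\lambda(p^{n-2})$. The Hecke algebra and the Satake quadratic deliver the constant $p^{\,k-1}$, namely $p^{11}$ when $k=12$, which is the product $\alpha_p\overline{\alpha_p}$ of the two Satake roots; the printed exponent $(k-1)/2$ instead equals the common modulus $\lvert\alpha_p\rvert=p^{(k-1)/2}$ of a single root. I would therefore fix the normalization of $\lambda$ at the outset, transcribe the local factor in that convention, and check against the known instance $\tau(p^{2})=\tau(p)^{2}-p^{11}$ (verified by $\tau(4)=\tau(2)^{2}-2^{11}=576-2048=-1472$) to confirm that the recursion constant is $p^{\,k-1}$. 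This is the sole place where the two natural powers of $p$ can be conflated, and hence the only point at which an error can enter the argument.
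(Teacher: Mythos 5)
Your argument is correct, but it cannot be compared line by line with the paper's, because the paper offers no proof of this lemma at all: it is stated with a bare citation to Mordell's 1917 paper and a pointer to \cite[Proposition 32]{KN84}. Your Hecke-algebra derivation --- coprime multiplicativity $T_mT_n=T_{mn}$ together with the local relation $T_pT_{p^{n-1}}=T_{p^n}+p^{k-1}T_{p^{n-2}}$, applied to a normalized eigenform and read off on eigenvalues --- is precisely the modern form of Mordell's original argument, so in substance you are supplying the proof the paper delegates to the literature. Two of your corrections matter and should be kept explicit. First, you are right that the printed recursion constant $p^{(k-1)/2}$ is wrong in every normalization: for the unnormalized coefficients the Satake product $\alpha_p\overline{\alpha_p}$ gives $p^{k-1}$, while for coefficients normalized by $n^{(k-1)/2}$ the constant would be $1$; the exponent $(k-1)/2$ is the modulus of a single root, not the product of the two, and the paper confirms $p^{k-1}$ internally, since Example \ref{exa2222.009} has $\tau(p^2)=\tau(p)^2-p^{11}$ with $k=12$, Lemma \ref{lem2222.044} carries $p^{11k}$, and your numerical check $\tau(4)=\tau(2)^2-2^{11}=576-2048=-1472$ agrees with the expansion of $\Delta$ in Section \ref{s9099}. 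Second, you quietly repair the hypothesis: as stated, the lemma asserts (i) and (ii) for an arbitrary modular form of weight $k\geq 4$, which is false --- multiplicativity of coefficients holds only for normalized Hecke eigenforms, as any non-eigenform linear combination in a cusp space of dimension at least two (level one, weight $24$, say) shows --- and your standing assumption $\lambda(1)=1$, $T_nf=\lambda(n)f$ is both necessary for the argument and harmless for the paper's application, since the level-one weight-$12$ cusp space is one-dimensional and $\Delta$ is its normalized eigenform. With those two emendations your proof is complete and standard.
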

Finer details appears in {\color{red}\cite[Proposition 32]{KN1984}} and similar references. These are compactly expressed by Mordell formula
\begin{equation}\label{eq2222.040}
\lambda(m)\lambda(n)=\sum_{d\mid \gcd(m,n)}d^{k-1} \lambda(mn/d^2).
\end{equation}

\begin{lem} \label{lem2222.044}\hypertarget{lem2222.044} For any prime $p\geq 2$, and an integer $n\geq 1$, $\lambda(p^n)\in \Z[p^{(k-1)/2},\lambda(p)]$ has an expansion as a polynomial
\begin{equation}\label{eq2222.040}
\lambda(p^n)=\sum_{0\leq k\leq n/2}(-1)^k \binom{n-k}{n-2k}p^{11k}\lambda(p)^{n-2k}
\end{equation}
of one variable $\lambda(p)$ of degree $n$.
\end{lem}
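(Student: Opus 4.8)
The plan is to read \eqref{eq2222.040} as the closed form of the second-order linear recurrence obeyed by the prime-power coefficients, and to prove it by induction on $n$. Lemma \ref{lem2222.002}(ii) furnishes the three-term Hecke recurrence, which for the weight-$12$ form reads $\lambda(p^{n})=\lambda(p)\lambda(p^{n-1})-p^{11}\lambda(p^{n-2})$ for $n\ge2$, with initial data $\lambda(p^{0})=\lambda(1)=1$ and $\lambda(p^{1})=\lambda(p)$. After the harmless simplification $\binom{n-k}{n-2k}=\binom{n-k}{k}$, the claim is the classical Lucas--Chebyshev identity for such a recurrence. I would first dispose of the base cases $n=0$ and $n=1$ (the former included only for convenience in the induction): in each the sum collapses to its single term $k=0$, returning $1$ and $\lambda(p)$ respectively, which matches the initial data.

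For the inductive step, fix $n\ge2$ and assume \eqref{eq2222.040} for $n-1$ and $n-2$. Multiplying the formula for $\lambda(p^{n-1})$ by $\lambda(p)$ raises each exponent of $\lambda(p)$ by one, giving $\sum_{k}(-1)^{k}\binom{n-1-k}{k}p^{11k}\lambda(p)^{n-2k}$. In the term $p^{11}\lambda(p^{n-2})$ the extra factor $p^{11}$ together with the substitution $k\mapsto k+1$ rewrites this contribution as $\sum_{k}(-1)^{k-1}\binom{n-1-k}{k-1}p^{11k}\lambda(p)^{n-2k}$. Subtracting the second from the first, the monomial $p^{11k}\lambda(p)^{n-2k}$ appears with coefficient $(-1)^{k}\bigl(\binom{n-1-k}{k}+\binom{n-1-k}{k-1}\bigr)$, and Pascal's rule $\binom{n-1-k}{k}+\binom{n-1-k}{k-1}=\binom{n-k}{k}$ contracts this to $(-1)^{k}\binom{n-k}{k}$, which is precisely the coefficient demanded by \eqref{eq2222.040}. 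This single application of Pascal's identity is the entire arithmetic content of the lemma.

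The only point that requires genuine care --- and hence the main, if mild, obstacle --- is the bookkeeping at the two ends of the summation range, where the parity of $n$ enters. At the lower end $k=0$ one must read the out-of-range symbol $\binom{n-1}{-1}$ as $0$, so that the leading monomial $\lambda(p)^{n}$ survives with coefficient $1$; this simultaneously yields the asserted degree $n$ and monic leading term. At the upper end $k=\lfloor n/2\rfloor$ one checks separately for even and odd $n$ that the Pascal combination reproduces the correct top coefficient (for even $n$ it comes entirely from the shifted sum, for odd $n$ from both sums), again treating any index outside $0\le k\le n/2$ as contributing nothing. Since every coefficient $(-1)^{k}\binom{n-k}{k}p^{11k}$ lies in $\Z[p^{11}]\subseteq\Z[p^{11/2},\lambda(p)]$, the stated ring membership follows at once. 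A fully equivalent and perhaps cleaner route, which I would keep in reserve, is to expand the generating function $\sum_{n\ge0}\lambda(p^{n})x^{n}=(1-\lambda(p)x+p^{11}x^{2})^{-1}$ as a geometric series in $\lambda(p)x-p^{11}x^{2}$ and collect the coefficient of $x^{n}$ by the multinomial theorem; this produces \eqref{eq2222.040} with no induction and isolates the same binomial identity.
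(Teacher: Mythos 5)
Your proof is correct, and it is worth noting that the paper itself supplies no proof of Lemma \ref{lem2222.044} at all --- the statement is followed only by the illustrative Example \ref{exa2222.009} --- so there is no argument to compare against; your induction fills a genuine gap. The two-step induction on the Hecke recurrence $\lambda(p^{n})=\lambda(p)\lambda(p^{n-1})-p^{11}\lambda(p^{n-2})$, the rewriting $\binom{n-k}{n-2k}=\binom{n-k}{k}$, the index shift $k\mapsto k+1$ in the $p^{11}\lambda(p^{n-2})$ term, and the single application of Pascal's rule all check out, and your attention to the boundary indices (reading $\binom{n-1}{-1}=0$ and verifying the top coefficient separately for even and odd $n$) is exactly the care the computation requires; the generating-function alternative via $(1-\lambda(p)x+p^{11}x^{2})^{-1}$ is equally valid. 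One caution inherited from the paper rather than from your argument: Lemma \ref{lem2222.002}(ii) as printed has the recurrence with $p^{(k-1)/2}$ where the correct Hecke relation (and the one consistent with Example \ref{exa2222.009} and with the exponent $p^{11k}$ in \eqref{eq2222.040}) requires $p^{k-1}=p^{11}$; you silently used the correct form, and the ring membership should accordingly read $\Z[p^{k-1},\lambda(p)]$, with the symbol $k$ in the summation index clashing with the weight $k$ in the statement. These are defects of the statement, not of your proof.
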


\begin{dfn}\label{dfn2222.000}\hypertarget{dfn2222.000} A weight $k\geq2$ coefficient $\lambda(p^n)$ has a representation as
\begin{equation}
\lambda\left (p^{2n}\right )=2p^{(k-1)n}\cos \theta, 
\end{equation}
where $0\leq \theta<\pi$.
\end{dfn}

%SSSSSSSSSSSSSSSSSSSSSSSSSSSSSSSSSSSSSSSSSSSSSSSSSSSSSSSSSSSSSSSSSSSSSSSSSSSSSSSSSSS
%SSSSSSSSSSSSSSSSSSSSSSSSSSSSSSSSSSSSSSSSSSSSSSSSSSSSSSSSSSSSSSSSSSSSSSSSSSSSSSSSSSS
%SSSSSSSSSSSSSSSSSSSSSSSSSSSSSSSSSSSSSSSSSSSSSSSSSSSSSSSSSSSSSSSSSSSSSSSSSSSSSSSSSSS
%SSSSSSSSSSSSSSSSSSSSSSSSSSSSSSSSSSSSSSSSSSSSSSSSSSSSSSSSSSSSSSSSSSSSSSSSSSSSSSSSSSS
%SSSSSSSSSSSSSSSSSSSSSSSSSSSSSSSSSSSSSSSSSSSSSSSSSSSSSSSSSSSSSSSSSSSSSSSSSSSSSSSSSSS
%SSSSSSSSSSSSSSSSSSSSSSSSSSSSSSSSSSSSSSSSSSSSSSSSSSSSSSSSSSSSSSSSSSSSSSSSSSSSSSSSSSS
%SSSSSSSSSSSSSSSSSSSSSSSSSSSSSSSSSSSSSSSSSSSSSSSSSSSSSSSSSSSSSSSSSSSSSSSSSSSSSSSSSSS
\section{Examples of New Form Product Expansion of Weight $ k $}\label{S7722NF-F}\hypertarget{S7722NF-F}
A few examples of newform of various weights and conductors are provided in this section.
%SSSSSSSSSSSSSSSSSSSSSSSSSSSSSSSSSSSSSSSSSSSSSSSSSSSSSSSSSSSSSSSSSSSSSSSSSSSSSSSSSSS
%SSSSSSSSSSSSSSSSSSSSSSSSSSSSSSSSSSSSSSSSSSSSSSSSSSSSSSSSSSSSSSSSSSSSSSSSSSSSSSSSSSS
%SSSSSSSSSSSSSSSSSSSSSSSSSSSSSSSSSSSSSSSSSSSSSSSSSSSSSSSSSSSSSSSSSSSSSSSSSSSSSSSSSSS
%SSSSSSSSSSSSSSSSSSSSSSSSSSSSSSSSSSSSSSSSSSSSSSSSSSSSSSSSSSSSSSSSSSSSSSSSSSSSSSSSSSS
%SSSSSSSSSSSSSSSSSSSSSSSSSSSSSSSSSSSSSSSSSSSSSSSSSSSSSSSSSSSSSSSSSSSSSSSSSSSSSSSSSSS
%SSSSSSSSSSSSSSSSSSSSSSSSSSSSSSSSSSSSSSSSSSSSSSSSSSSSSSSSSSSSSSSSSSSSSSSSSSSSSSSSSSS
%SSSSSSSSSSSSSSSSSSSSSSSSSSSSSSSSSSSSSSSSSSSSSSSSSSSSSSSSSSSSSSSSSSSSSSSSSSSSSSSSSSS
\subsection{New Form Product Expansion of Weight $ k=12 $}\label{s7722-1}

Weight $ k=12 $ and conductor $ N=1 $.
\begin{eqnarray} \label{eq9922.003}
	f(s)&=&\sum_{n \geq1} \lambda(n)q^{n}\\
	&=&q\prod_{n \geq 1} \left (1-q^{n}\right )^{24}\nonumber\\
	&=&q-24q^2+252q^3+\cdots\nonumber,
\end{eqnarray}
where $ q=e^{i\pi s} $, and $ s\in \mathcal{H}=\{s\in \C:\Re e(s)>0\} $.
%SSSSSSSSSSSSSSSSSSSSSSSSSSSSSSSSSSSSSSSSSSSSSSSSSSSSSSSSSSSSSSSSSSSSSSSSSSSSSSSSSSS
%SSSSSSSSSSSSSSSSSSSSSSSSSSSSSSSSSSSSSSSSSSSSSSSSSSSSSSSSSSSSSSSSSSSSSSSSSSSSSSSSSSS
%SSSSSSSSSSSSSSSSSSSSSSSSSSSSSSSSSSSSSSSSSSSSSSSSSSSSSSSSSSSSSSSSSSSSSSSSSSSSSSSSSSS
%SSSSSSSSSSSSSSSSSSSSSSSSSSSSSSSSSSSSSSSSSSSSSSSSSSSSSSSSSSSSSSSSSSSSSSSSSSSSSSSSSSS
%SSSSSSSSSSSSSSSSSSSSSSSSSSSSSSSSSSSSSSSSSSSSSSSSSSSSSSSSSSSSSSSSSSSSSSSSSSSSSSSSSSS
%SSSSSSSSSSSSSSSSSSSSSSSSSSSSSSSSSSSSSSSSSSSSSSSSSSSSSSSSSSSSSSSSSSSSSSSSSSSSSSSSSSS
%SSSSSSSSSSSSSSSSSSSSSSSSSSSSSSSSSSSSSSSSSSSSSSSSSSSSSSSSSSSSSSSSSSSSSSSSSSSSSSSSSSS
\subsection{New Form Product Expansion of Weight $ k=2 $}\label{s7722-11}

Weight $ k=2 $ and Conductor $ N=11 $.\\

The corresponding elliptic curve has the algebraic equation $ E:y^2-y=x^3-x^2 $. 
\begin{eqnarray} \label{eq9922.1103}
	f(s)&=&\sum_{n \geq1} \lambda(n)q^{n}\\
	&=&q\prod_{n \geq 1} \left (1-q^{n}\right )^{2}   \left (1-q^{11n}\right )^{2} \nonumber \\
	&=&q-2q^2-q^3+\cdots \nonumber,
\end{eqnarray}
where $ q=e^{i\pi s} $, and $ s\in \mathcal{H}=\{s\in \C:\Re e(s)>0\} $.
%SSSSSSSSSSSSSSSSSSSSSSSSSSSSSSSSSSSSSSSSSSSSSSSSSSSSSSSSSSSSSSSSSSSSSSSSSSSSSSSSSSS
%SSSSSSSSSSSSSSSSSSSSSSSSSSSSSSSSSSSSSSSSSSSSSSSSSSSSSSSSSSSSSSSSSSSSSSSSSSSSSSSSSSS
%SSSSSSSSSSSSSSSSSSSSSSSSSSSSSSSSSSSSSSSSSSSSSSSSSSSSSSSSSSSSSSSSSSSSSSSSSSSSSSSSSSS
%SSSSSSSSSSSSSSSSSSSSSSSSSSSSSSSSSSSSSSSSSSSSSSSSSSSSSSSSSSSSSSSSSSSSSSSSSSSSSSSSSSS
%SSSSSSSSSSSSSSSSSSSSSSSSSSSSSSSSSSSSSSSSSSSSSSSSSSSSSSSSSSSSSSSSSSSSSSSSSSSSSSSSSSS
%SSSSSSSSSSSSSSSSSSSSSSSSSSSSSSSSSSSSSSSSSSSSSSSSSSSSSSSSSSSSSSSSSSSSSSSSSSSSSSSSSSS
%SSSSSSSSSSSSSSSSSSSSSSSSSSSSSSSSSSSSSSSSSSSSSSSSSSSSSSSSSSSSSSSSSSSSSSSSSSSSSSSSSSS

\subsection{New Form Product Expansion of Weight $ k=2 $}\label{s7722-32}
Weight $ k=2 $ and Conductor $ N=32 $.\\

The corresponding elliptic curve has the algebraic equation $ E:y^2=x^3-x $. 
\begin{eqnarray} \label{eq9922.3203}
	f(s)&=&\sum_{n \geq1} \lambda(n)q^{n}\\
	&=&q\prod_{n \geq 1} \left (1-q^{4n}\right )^{2}   \left (1-q^{8n}\right )^{2}\nonumber\\ &=&q-2q^5-3q^9+6q^{13}+2q^{17}+\cdots \nonumber,
\end{eqnarray}
where $ q=e^{i\pi s} $, and $ s\in \mathcal{H}=\{s\in \C:\Re e(s)>0\} $.

%SSSSSSSSSSSSSSSSSSSSSSSSSSSSSSSSSSSSSSSSSSSSSSSSSSSSSSSSSSSSSSSSSSSSSSSSSSSSSSSSSSS
%SSSSSSSSSSSSSSSSSSSSSSSSSSSSSSSSSSSSSSSSSSSSSSSSSSSSSSSSSSSSSSSSSSSSSSSSSSSSSSSSSSS
%SSSSSSSSSSSSSSSSSSSSSSSSSSSSSSSSSSSSSSSSSSSSSSSSSSSSSSSSSSSSSSSSSSSSSSSSSSSSSSSSSSS
%SSSSSSSSSSSSSSSSSSSSSSSSSSSSSSSSSSSSSSSSSSSSSSSSSSSSSSSSSSSSSSSSSSSSSSSSSSSSSSSSSSS
%SSSSSSSSSSSSSSSSSSSSSSSSSSSSSSSSSSSSSSSSSSSSSSSSSSSSSSSSSSSSSSSSSSSSSSSSSSSSSSSSSSS
%SSSSSSSSSSSSSSSSSSSSSSSSSSSSSSSSSSSSSSSSSSSSSSSSSSSSSSSSSSSSSSSSSSSSSSSSSSSSSSSSSSS
%SSSSSSSSSSSSSSSSSSSSSSSSSSSSSSSSSSSSSSSSSSSSSSSSSSSSSSSSSSSSSSSSSSSSSSSSSSSSSSSSSSS
\section{Binet Formula}\label{S9922BF}\hypertarget{S9922BF}
A representation of the Fourier coefficients as a linear recurrent sequence is provided here. Various other interesting results on this topic appears in \cite{LR2013}.
\begin{dfn}\label{dfn9922BF.050}\hypertarget{dfn9922BF.050} The weight $k\geq2$ characteristic polynomial is defined by 
	\begin{equation}\label{eq9922BF.050j}
		f_k(X)=X^2-\lambda(p) X+p^{(k-1)/2}.
	\end{equation}
	Moreover, there is a unique root $\alpha_p =p^{(k-1)/2}e^{i\theta}$, where $0\leq \theta<\pi$.
\end{dfn}
\begin{lem} \label{lem9922BF.180}\hypertarget{lem9922BF.180} Let $f(z)=\sum_{n \geq1} \lambda(n)q^n$  be a modular form of level $N=1$ and weight $k\geq 4$. The $n$th coefficient $\lambda(p^n)$ at the prime power $p^n$ has the followings representations.
	\begin{enumerate}[font=\normalfont, label=(\roman*)]
		\item $\displaystyle \lambda(p^n)=2p^{(k-1)n/2}\cos n\theta$,\\
		
		\item $\displaystyle \lambda(p^n)=p^{(k-1)n/2}\frac{\sin (n+1) \theta}{\sin  \theta}$,
	\end{enumerate}
	where $0\leq \theta\leq \pi$.
\end{lem}

\begin{proof}[\textbf{Proof}]   (ii) Substitute $\alpha_p =p^{(k-1)/2}e^{i\theta}$ into the Binet formula for recurrent sequence defined by the polynomial \eqref{eq9922BF.050j}, and simplify it:
	\begin{eqnarray} \label{eq9922BF.180k}
		\lambda(p^n)&=&\frac{\alpha_p^{n+1}-\overline{\alpha_p}^{n+1}}{\alpha_p-\overline{\alpha_p}}\\
		&=&\frac{i2p^{(k-1)(n+1)/2}\sin (n+1) \theta}{i2p^{(k-1)/2}\sin  \theta}\nonumber \nonumber,
	\end{eqnarray}
	as required.
\end{proof}

%SSSSSSSSSSSSSSSSSSSSSSSSSSSSSSSSSSSSSSSSSSSSSSSSSSSSSSSSSSSSSSSSSSSSSSSSSSSSSSSSSSS
%SSSSSSSSSSSSSSSSSSSSSSSSSSSSSSSSSSSSSSSSSSSSSSSSSSSSSSSSSSSSSSSSSSSSSSSSSSSSSSSSSSS
%SSSSSSSSSSSSSSSSSSSSSSSSSSSSSSSSSSSSSSSSSSSSSSSSSSSSSSSSSSSSSSSSSSSSSSSSSSSSSSSSSSS
%SSSSSSSSSSSSSSSSSSSSSSSSSSSSSSSSSSSSSSSSSSSSSSSSSSSSSSSSSSSSSSSSSSSSSSSSSSSSSSSSSSS
%SSSSSSSSSSSSSSSSSSSSSSSSSSSSSSSSSSSSSSSSSSSSSSSSSSSSSSSSSSSSSSSSSSSSSSSSSSSSSSSSSSS
%SSSSSSSSSSSSSSSSSSSSSSSSSSSSSSSSSSSSSSSSSSSSSSSSSSSSSSSSSSSSSSSSSSSSSSSSSSSSSSSSSSS
%SSSSSSSSSSSSSSSSSSSSSSSSSSSSSSSSSSSSSSSSSSSSSSSSSSSSSSSSSSSSSSSSSSSSSSSSSSSSSSSSSSS
\section{Multiplicative Property}\label{S9922TCM12}\hypertarget{S9922TCM12}
The multiplicative property of the $n$th coefficients $\tau(n)$ was proved by Mordell. More recently, a partial proof of this result is discussed in \cite{WK2015}.

\begin{lem} {\normalfont(\cite{ML1917}) }\label{lem9922TCM.012}\hypertarget{lem9922TCM.012} Let $m,n\geq 1$ be a pair of integers, and let $p\geq 2$ be a prime. Then
	\begin{enumerate}[font=\normalfont, label=(\roman*)]
	\item $\displaystyle \tau(mn)=\tau(m)\tau(n)$ if $\gcd(m,n)=1$.
	\item $\displaystyle \tau(p^n)=\tau(p)\tau(p^{n-1})-p^{k-1}\tau(p^{n-2})$ if $n\geq 2$.
\end{enumerate}
\end{lem}

\begin{exa} \label{exa9922TCM12.044d}\hypertarget{exa9922TCM12.044d}{\normalfont The coefficient $\tau(p^n)$ is a polynomial in $\tau(n)$. The first few polynomials are these:
		\begin{enumerate}
			\item $\displaystyle \tau(p)$
			\item $\displaystyle \tau(p^2)=\tau(p)^2-p^{11}$
			\item $\displaystyle \tau(p^3)=\tau(p)\tau(p^2)-p^{11}\tau(p)=\tau(p)\left( \tau(p)^2-p^{11}\right )-p^{11}\tau(p)=\tau(p)^3-2p^{11}\tau(p)$.
			\item $\displaystyle \tau(p^4)=\tau(p)\tau(p^3)-p^{11}\tau(p^2)=\tau(p)^4-3p^{11}\tau(p)^2+p^{22}$.
			\item $\displaystyle \tau(p^5)=\tau(p)\tau(p^4)-p^{11}\tau(p^3)=\tau(p)^5-4p^{11}\tau(p)^3+3p^{22}\tau(p)$.
\item $\displaystyle			\tau(p^6)=\tau(p)\tau(p^5)-p^{11}\tau(p^4)=\tau(p)^6-5p^{11}\tau(p)^4 +6p^{11}\tau(p)^2-p^{33}$.
		\end{enumerate}	
These are recursively computed, for example, the last one is:
\begin{align}
	\tau(p^n)&=\tau(p)\tau(p^{n-1})-p^{k-1}\tau(p^{n-2})\\[.3cm]
	\tau(p^6)&=\tau(p)\tau(p^5)-p^{11}\tau(p^4)\nonumber\\[.3cm]
	&=\tau(p)\left( \tau(p)^5-4p^{11}\tau(p)^3+3p^{22}\tau(p)\right) -p^{11}\left(\tau(p)^4-3p^{11}\tau(p)^2+p^{22} \right)\nonumber\\[.3cm]
	&= \tau(p)^6-5p^{11}\tau(p)^4 +6p^{11}\tau(p)^2-p^{33}\nonumber. 
\end{align}
	}
\end{exa}

\begin{lem} \label{lem9922TCM.012-D}\hypertarget{lem9922TCM.012-D} Suppose that $\tau(p)=0$ for some prime $p>p_0$, then
	\begin{enumerate}[font=\normalfont, label=(\roman*)]
		\item $\displaystyle \tau(p^{2n+1})=0$,\tabto{7cm} for all integers $n\geq0$.
	\item $\displaystyle \tau(p^{2n})=\pm p^{11n}$,\tabto{7cm} for all integers $n\geq1$.
	\end{enumerate}
\end{lem}

\begin{proof}[\textbf{Proof}]   (i) By \hyperlink{lem9922TCM.012}{Lemma} \ref{lem9922TCM.012}, there is a polynomial $F(t)\in\Z[t] $ such that
\begin{equation}\label{eq9922TCM.012d2}
	F(\tau(p^{2n+1}))=\tau(p)F_0(\tau(p))
\end{equation}for any integer $n\geq0$, so $F(t)=0$ at $t=\tau(p)=0$. (ii) Similarly, there is a polynomial $G(t)\in\Z[t] $ such that
\begin{equation}\label{eq9922TCM.012d4}
G(\tau(p^{2n}))=\tau(p)G_0(\tau(p))\pm p^{11n}
\end{equation}for any integer $n\geq1$, so $G(t)=\pm p^{11n}$ at $t=\tau(p)=0$.
\end{proof}

%SSSSSSSSSSSSSSSSSSSSSSSSSSSSSSSSSSSSSSSSSSSSSSSSSSSSSSSSSSSSSSSSSSSSSSSSSSSSSSSSSSS
%SSSSSSSSSSSSSSSSSSSSSSSSSSSSSSSSSSSSSSSSSSSSSSSSSSSSSSSSSSSSSSSSSSSSSSSSSSSSSSSSSSS
%SSSSSSSSSSSSSSSSSSSSSSSSSSSSSSSSSSSSSSSSSSSSSSSSSSSSSSSSSSSSSSSSSSSSSSSSSSSSSSSSSSS
%SSSSSSSSSSSSSSSSSSSSSSSSSSSSSSSSSSSSSSSSSSSSSSSSSSSSSSSSSSSSSSSSSSSSSSSSSSSSSSSSSSS
%SSSSSSSSSSSSSSSSSSSSSSSSSSSSSSSSSSSSSSSSSSSSSSSSSSSSSSSSSSSSSSSSSSSSSSSSSSSSSSSSSSS
%SSSSSSSSSSSSSSSSSSSSSSSSSSSSSSSSSSSSSSSSSSSSSSSSSSSSSSSSSSSSSSSSSSSSSSSSSSSSSSSSSSS
%SSSSSSSSSSSSSSSSSSSSSSSSSSSSSSSSSSSSSSSSSSSSSSSSSSSSSSSSSSSSSSSSSSSSSSSSSSSSSSSSSSS
\section{Some Congruences}\label{S5522TC-C}\hypertarget{S5522TC-C}
The Fourier coefficients of some modular function satisfy some stringent and rigid congruences. A few congruences for the tau function are included here.  
\begin{lem}\label{lem5522TC.002}\hypertarget{lem5522TC.002} {\normalfont({\color{red}\cite[p. \ 24]{ZD2003}})} If $n \geq 1$ is an integer, then 
	\begin{equation} \label{eq5522TC.004}
		\tau(n)\equiv  
		\begin{cases}
			1 \bmod 2& \text { if } n=(2a+1)^{2b},\\
			0\bmod 2& \text { if } n\ne(2a+1)^{2b},
		\end{cases}
	\end{equation}
	where $a,b\in \N$.
\end{lem}
The sum of divisors function is defined by $\sigma_{s}(n)=\sum_{d\mid n}d^s$, with $s \in \C$. The Kolberg congruences express the values $\tau(n)$ at the odd integers $n\geq1$ as congruences of the sum of divisors function $\sigma_{11}(n)$. A general discussion of these congruences appears in {\color{red}\cite[p.\;4]{SP1972}}.
\begin{lem}\label{lem5522KC.008}\hypertarget{lem5522KC.008} {\normalfont({\color{red}\cite{KO1962}})} If $n \geq 1$ is an odd integer, then
	\begin{enumerate}[font=\normalfont, label=(\roman*)]
		\item $\displaystyle \tau(n)\equiv \sigma_{11}(n) \tmod 2^{11}$,    \tabto{10cm}if $n\equiv 1 \tmod 8$,
		\item $\displaystyle \tau(n)\equiv 1217\sigma_{11}(n) \tmod 2^{13}$,    \tabto{10cm}if $n\equiv 3 \tmod 8$,
		\item $\displaystyle \tau(n)\equiv 1537\sigma_{11}(n) \tmod 2^{12}$,    \tabto{10cm}if $n\equiv 5 \tmod 8$,
		\item $\displaystyle \tau(n)\equiv 705\sigma_{11}(n) \tmod 2^{14}$,    \tabto{10cm}if $n\equiv 7 \tmod 8$.
	\end{enumerate}
\end{lem}
%SSSSSSSSSSSSSSSSSSSSSSSSSSSSSSSSSSSSSSSSSSSSSSSSSSSSSSSSSSSSSSSSSSSSSSSSSSSSSSSSSSS
%SSSSSSSSSSSSSSSSSSSSSSSSSSSSSSSSSSSSSSSSSSSSSSSSSSSSSSSSSSSSSSSSSSSSSSSSSSSSSSSSSSS
%SSSSSSSSSSSSSSSSSSSSSSSSSSSSSSSSSSSSSSSSSSSSSSSSSSSSSSSSSSSSSSSSSSSSSSSSSSSSSSSSSSS
%SSSSSSSSSSSSSSSSSSSSSSSSSSSSSSSSSSSSSSSSSSSSSSSSSSSSSSSSSSSSSSSSSSSSSSSSSSSSSSSSSSS
%SSSSSSSSSSSSSSSSSSSSSSSSSSSSSSSSSSSSSSSSSSSSSSSSSSSSSSSSSSSSSSSSSSSSSSSSSSSSSSSSSSS
%SSSSSSSSSSSSSSSSSSSSSSSSSSSSSSSSSSSSSSSSSSSSSSSSSSSSSSSSSSSSSSSSSSSSSSSSSSSSSSSSSSS
%SSSSSSSSSSSSSSSSSSSSSSSSSSSSSSSSSSSSSSSSSSSSSSSSSSSSSSSSSSSSSSSSSSSSSSSSSSSSSSSSSSS
\section{Integers Representations and Quadratic Forms} \label{S1234IRQF-A}\hypertarget{S1234IRQF-A}
The quadratic form $ Q(x_1,...,x_m)\in \Z[x_1,...,x_m]$ is a linear combination
of $m$ squares.

\begin{thm}{\normalfont{\color{red}(\cite[Proposition 12]{DZ2013})} }\label{thm1234IRQF.100}\hypertarget{thm1234IRQF.100} Let $Q : \Z^m \longrightarrow \Z$ be a positive definite even unimodular
	quadratic form in $m$ variables. Then
	\begin{enumerate}[font=\normalfont, label=(\roman*)]
		\item the rank $m$ is divisible by $8$, and
		\item the number of representations of $n \in\N$ by $Q$ is given for large $n$ by the
		formula $$\displaystyle R_Q(n)=-\frac{2k}{B_k}\cdot \sigma_{k-1}(n)+O\left( n^{k/2}\right), $$
		where $m =2k$ and $B_k$ denotes the $k$th Bernoulli number.
	\end{enumerate}
\end{thm}

\begin{prop}\label{prop1234NRI.140}\hypertarget{prop1234NRI.140} Let $n\geq1$ be an integer and let $r_{24}(n)$ denotes the number of representations as sum of $24$ squares. Then
	\begin{equation}
		r_{24}(n)=\frac{16}{691}\sum_{d\mid n}(-1)^{n+d}d^{11}+\frac{128}{691}\left( (-1)^{n-1}259\tau(n)-512\tau(n/2)\right) .
	\end{equation}
\end{prop}
\begin{proof}[\textbf{Proof}]   This is a special case $s=24$ of a more general formula, see \cite{UW2014}, {\color{red}\cite[p.\;186]{MW2006}}, et alii.
\end{proof}

%SSSSSSSSSSSSSSSSSSSSSSSSSSSSSSSSSSSSSSSSSSSSSSSSSSSSSSSSSSSSSSSSSSSSSSSSSSSSSSSSSSS
%SSSSSSSSSSSSSSSSSSSSSSSSSSSSSSSSSSSSSSSSSSSSSSSSSSSSSSSSSSSSSSSSSSSSSSSSSSSSSSSSSSS
%SSSSSSSSSSSSSSSSSSSSSSSSSSSSSSSSSSSSSSSSSSSSSSSSSSSSSSSSSSSSSSSSSSSSSSSSSSSSSSSSSSS
%SSSSSSSSSSSSSSSSSSSSSSSSSSSSSSSSSSSSSSSSSSSSSSSSSSSSSSSSSSSSSSSSSSSSSSSSSSSSSSSSSSS
%SSSSSSSSSSSSSSSSSSSSSSSSSSSSSSSSSSSSSSSSSSSSSSSSSSSSSSSSSSSSSSSSSSSSSSSSSSSSSSSSSSS
%SSSSSSSSSSSSSSSSSSSSSSSSSSSSSSSSSSSSSSSSSSSSSSSSSSSSSSSSSSSSSSSSSSSSSSSSSSSSSSSSSSS
%SSSSSSSSSSSSSSSSSSSSSSSSSSSSSSSSSSSSSSSSSSSSSSSSSSSSSSSSSSSSSSSSSSSSSSSSSSSSSSSSSSS
\section{Partial Results for the Lehmer Conjecture}\label{S9988LC-S}\hypertarget{S9988LC-S}
The wide range of congruences available for the Ramanujan function are useful in establishing the nonvanishing property for many arithmetic progessions of primes, but not all. For example, the 3 Lemmas below work very well to exclude all the primes $p\equiv 1,3,5 \bmod 8$ but the same technique fails to establish that  $\tau(p)\ne0$ for $p\equiv 7 \bmod 8$. 
%LLLLLLLLLLLLLLLLLLLLLLLLLLLLLLLLLLLLLLLLLLLLLLLLLLLLLLLLLLLLLLLLLLLLLLLLLLLLLLL
%LLLLLLLLLLLLLLLLLLLLLLLLLLLLLLLLLLLLLLLLLLLLLLLLLLLLLLLLLLLLLLLLLLLLLLLLLLLLLLL
\begin{lem} \label{lem9988LC.092-1}\hypertarget{lem9988LC.092-1} If $p\equiv 1 \bmod 8$ is a prime, then $\tau(p)\ne0$.
\end{lem}

\begin{proof}[\textbf{Proof}]    To prove that $\tau(p)\ne0$ for all odd primes $p\equiv 1 \bmod 8$, the value of $\left |\tau(p^3)\right |$ is computed in two distinct ways. A comparison of these evaluations imply the claim.\\
	
	\textit{Evaluation} I. By \hyperlink{lem5522KC.008}{Lemma} \ref{lem5522KC.008}, 
	\begin{align}\label{eq9988LC.092d}
		\tau(n)&\equiv \sigma_{11}(n) \tmod 2^{11}\\
		&\equiv \sigma_{11}(n) \tmod 8\nonumber
	\end{align}
	for every odd integer $n\geq 1$. Letting $p\equiv 1\tmod 8$ and replacing $n=p^3$ yields
	\begin{eqnarray}\label{eq9988LC.092f}
		\tau(p^3)&\equiv &\sigma_{11}(p^3) \tmod 2^{8}\\
		&\equiv &\left( 1+p^{11}+p^{22}+p^{33}\right)  \tmod 8\nonumber\\
		&\equiv &4 \tmod 8\nonumber.
	\end{eqnarray}
	Therefore, there is an odd integer such that
	\begin{equation}\label{eq9988LC.092i}
		\tau(p^3)=4+8a_1,
	\end{equation}	
	where $a_1\in \Z$ is an integer, and 	
	the absolute value has the lower bound
	\begin{equation}\label{eq9988.092j}
		\left |\tau(p^3)\right |=\left |4+8a_1\right | \geq 4.
	\end{equation}
	
	\textit{Evaluation} II. By \hyperlink{lem9922TCM.012}{Lemma} \ref{lem9922TCM.012} or \hyperlink{exa9922TCM12.044d}{Example} \ref{exa9922TCM12.044d}, the function 
	\begin{equation}\label{eq9988.092i1}
		\tau(p^3)=\tau(p)^3-2p^{11}\tau(p)=\tau(p)\left ( \tau(p)^2-2p^{11}\right )
	\end{equation}
	is a polynomial in $\tau(p)$ and its absolute value is 
	\begin{equation}\label{eq9988.092i}
		|\tau(p^3) |=|\tau(p)|\cdot | \tau(p)^2-2p^{11} |.
	\end{equation}	
	. Comparing the absolute value of \eqref{eq9988.092i} and \eqref{eq9988.092j} returns 
	\begin{eqnarray}\label{eq9988.092k}
		\left |\tau(p)\right |\cdot \left |\tau(p)^2-2p^{11}\right |
		&=&\left |\tau(p^3)\right |\nonumber\\[.3cm]
		&=&\left |4+8a_1\right |\nonumber\\[.3cm]
		&\geq&4\nonumber.
	\end{eqnarray}
	Clearly, the case $\tau(p)= 0$ is false. Consequently, $\tau(p)\ne 0$ for all $p\equiv 1\tmod 8$.
\end{proof}
%LLLLLLLLLLLLLLLLLLLLLLLLLLLLLLLLLLLLLLLLLLLLLLLLLLLLLLLLLLLLLLLLLLLLLLLLLLLLLLL
%LLLLLLLLLLLLLLLLLLLLLLLLLLLLLLLLLLLLLLLLLLLLLLLLLLLLLLLLLLLLLLLLLLLLLLLLLLLLLLL
\begin{lem} \label{lem9988LC.092-3}\hypertarget{lem9988LC.092-3} If $p\equiv 3 \bmod 8$ is a prime, then $\tau(p)\ne0$.
\end{lem}

\begin{proof}[\textbf{Proof}]   To prove that $\tau(p)\ne0$ for all odd primes $p\equiv 3 \bmod 8$, the value of $\left |\tau(p^5)\right |$ is computed in two distinct ways. A comparison of these evaluations imply the claim.\\
	
	\textit{Evaluation} I. By \hyperlink{lem5522KC.008}{Lemma} \ref{lem5522KC.008}, 
	\begin{align}\label{eq9988LC.092d3}
		\tau(n)&\equiv 1217\cdot\sigma_{11}(n) \tmod 2^{13}\\
		&\equiv \sigma_{11}(n) \tmod 8\nonumber
	\end{align}
	for every odd integer $n\geq 1$. Letting $p\equiv 3\tmod 8$ and replacing $n=p^5$ yields
	\begin{eqnarray}\label{eq9988LC.092f3}
		\tau(p^5)&\equiv &\sigma_{11}(p^5) \tmod 8\\
		&\equiv &\left( 1+p^{11}+p^{22}+p^{33}+p^{44}+p^{55}\right)  \tmod 8\nonumber\\
		&\equiv &4 \tmod 8\nonumber.
	\end{eqnarray}
	Therefore, there is an odd integer such that
	\begin{equation}\label{eq9988LC.092i3}
		\tau(p^5)=4+8a_3,
	\end{equation}	
	where $a_3\in \Z$ is an integer, and 	
	the absolute value has the lower bound
	\begin{equation}\label{eq9988.092j3}
		\left |\tau(p^5)\right |=\left |4+8a_3\right | \geq 4.
	\end{equation}
	
	\textit{Evaluation} II. By \hyperlink{lem9922TCM.012}{Lemma} \ref{lem9922TCM.012} or \hyperlink{exa9922TCM12.044d}{Example} \ref{exa9922TCM12.044d}, the function 
	\begin{equation}\label{eq9988.092i13}
		\tau(p^5)=\tau(p)^5-4p^{11}\tau(p)^3+3p^{22}\tau(p)=\tau(p)\left (\tau(p)^4-4p^{11}\tau(p)^2+3p^{22}\right )
	\end{equation}
	is a polynomial in $\tau(p)$ and its absolute value is 
	\begin{equation}\label{eq9988.092i3}
		|\tau(p^5) |=|\tau(p)|\cdot | \tau(p)^4-4p^{11}\tau(p)^2+3p^{22} |.
	\end{equation}	
	. Comparing the absolute value of \eqref{eq9988.092i3} and \eqref{eq9988.092j3} returns 
	\begin{eqnarray}\label{eq9988.092k3}
		|\tau(p)|\cdot | \tau(p)^4-4p^{11}\tau(p)^2+3p^{22} |
		&=&\left |\tau(p^5)\right |\nonumber\\[.3cm]
		&=&\left |4+8a_3\right |\nonumber\\[.3cm]
		&\geq&4\nonumber.
	\end{eqnarray}
	Clearly, the case $\tau(p)= 0$ is false. Consequently, $\tau(p)\ne 0$ for all $p\equiv 3\tmod 8$.
\end{proof}

%LLLLLLLLLLLLLLLLLLLLLLLLLLLLLLLLLLLLLLLLLLLLLLLLLLLLLLLLLLLLLLLLLLLLLLLLLLLLLLL
%LLLLLLLLLLLLLLLLLLLLLLLLLLLLLLLLLLLLLLLLLLLLLLLLLLLLLLLLLLLLLLLLLLLLLLLLLLLLLLL
\begin{lem} \label{lem9988LC.092-5}\hypertarget{lem9988LC.092-5} If $p\equiv 5 \bmod 8$ is a prime, then $\tau(p)\ne0$.
\end{lem}

\begin{proof}[\textbf{Proof}]    To prove that $\tau(p)\ne0$ for all odd primes $p\equiv 5 \bmod 8$, the value of $\left |\tau(p^3)\right |$ is computed in two distinct ways. A comparison of these evaluations imply the claim.\\
	
	\textit{Evaluation} I. By \hyperlink{lem5522KC.008}{Lemma} \ref{lem5522KC.008}, 
	\begin{align}\label{eq9988LC.092d5}
		\tau(n)&\equiv 1537\cdot\sigma_{11}(n) \tmod 2^{13}\\
		&\equiv \sigma_{11}(n) \tmod 8\nonumber
	\end{align}
	for every odd integer $n\geq 1$. Letting $p\equiv 5\tmod 8$ and replacing $n=p^3$ yields
	\begin{eqnarray}\label{eq9988LC.092f5}
		\tau(p^3)&\equiv &\sigma_{11}(p^3) \tmod 2^{8}\\
		&\equiv &\left( 1+p^{11}+p^{22}+p^{33}\right)  \tmod 8\nonumber\\
		&\equiv &2 \tmod 8\nonumber.
	\end{eqnarray}
	Therefore, there is an odd integer such that
	\begin{equation}\label{eq9988LC.092i5}
		\tau(p^3)=2+8a_5,
	\end{equation}	
	where $a_5\in \Z$ is an integer, and 	
	the absolute value has the lower bound
	\begin{equation}\label{eq9988.092j5}
		\left |\tau(p^3)\right |=\left |2+8a_5\right | \geq 2.
	\end{equation}
	
	\textit{Evaluation} II. By \hyperlink{lem9922TCM.012}{Lemma} \ref{lem9922TCM.012} or \hyperlink{exa9922TCM12.044d}{Example} \ref{exa9922TCM12.044d}, the function 
	\begin{equation}\label{eq9988.092i9}
		\tau(p^3)=\tau(p)^3-2p^{11}\tau(p)=\tau(p)\left ( \tau(p)^2-2p^{11}\right )
	\end{equation}
	is a polynomial in $\tau(p)$ and its absolute value is 
	\begin{equation}\label{eq9988.092i5}
		|\tau(p^3) |=|\tau(p)|\cdot | \tau(p)^2-2p^{11} |.
	\end{equation}	
	Comparing the absolute value of \eqref{eq9988.092i5} and \eqref{eq9988.092j5} returns 
	\begin{eqnarray}\label{eq9988.092k5}
		\left |\tau(p)\right |\cdot \left |\tau(p)^2-2p^{11}\right |
		&=&\left |\tau(p^3)\right |\nonumber\\[.3cm]
		&=&\left |2+8a_5\right |\nonumber\\[.3cm]
		&\geq&2\nonumber.
	\end{eqnarray}
	Clearly, the case $\tau(p)= 0$ is false. Consequently, $\tau(p)\ne 0$ for all $p\equiv 5\tmod 8$.
\end{proof}

%SSSSSSSSSSSSSSSSSSSSSSSSSSSSSSSSSSSSSSSSSSSSSSSSSSSSSSSSSSSSSSSSSSSSSSSSSSSSSSSSSSS
%SSSSSSSSSSSSSSSSSSSSSSSSSSSSSSSSSSSSSSSSSSSSSSSSSSSSSSSSSSSSSSSSSSSSSSSSSSSSSSSSSSS
%SSSSSSSSSSSSSSSSSSSSSSSSSSSSSSSSSSSSSSSSSSSSSSSSSSSSSSSSSSSSSSSSSSSSSSSSSSSSSSSSSSS
%SSSSSSSSSSSSSSSSSSSSSSSSSSSSSSSSSSSSSSSSSSSSSSSSSSSSSSSSSSSSSSSSSSSSSSSSSSSSSSSSSSS
%SSSSSSSSSSSSSSSSSSSSSSSSSSSSSSSSSSSSSSSSSSSSSSSSSSSSSSSSSSSSSSSSSSSSSSSSSSSSSSSSSSS
%SSSSSSSSSSSSSSSSSSSSSSSSSSSSSSSSSSSSSSSSSSSSSSSSSSSSSSSSSSSSSSSSSSSSSSSSSSSSSSSSSSS
%SSSSSSSSSSSSSSSSSSSSSSSSSSSSSSSSSSSSSSSSSSSSSSSSSSSSSSSSSSSSSSSSSSSSSSSSSSSSSSSSSSS
\section{Results for the Lehmer Conjecture}\label{S9988LC-W}\hypertarget{S9988LC-W}
The vanishing of some coefficients of the product
\begin{align}\label{eq5599NTF.500W00}
\sum_{n\geq1}a(n)x^{n}=\prod_{n\geq1}\left( 1-x^{n}\right)^m 
\end{align}
for some parameters $m\leq15$ is discussed by Lehmer and asks the about the situation for $m=24$. Specifically, $\Delta(x)=\sum_{n\geq1}\tau(n)x^{n-1}=x\prod_{n\geq}\left(1-x^n \right)^{24}$. 

\begin{conj}{\normalfont ({\color{red}\cite[p.\;429]{LD1947}}) }\label{conj9988LC.500W}\hypertarget{conj9988LC.500W} Is the Ramanujan function $\tau(n)= 0$ for any integers $n\in\N$.
\end{conj}

Some of the reported numerical calculations are the followings.
\begin{enumerate}
	\item $\tau(p)\ne 0$ for $p<3316798$. In \cite{LD1947}, Lehmer used fast primality test to verify this inequality.
	\item $\tau(p)\ne 0$ for $p<10^{15}$. In \cite{SJ2003}, Serre used congruences to verify this inequality. 
	\item $\tau(n)\ne 0$ for $n<816212624008487344127999\approx 8\times 10^{23}$.  In {\color{red}\cite[Corollary 1.2.]{DZ2013}}, Derickx et alii, used Galois representation theory and other advanced techniques to verify the Fourier coefficient inequality $\tau(n)\ne 0$ up to this value.
\end{enumerate}
To investigate the claim that $\tau(n)\ne 0$ for $n \geq 1$, it is sufficient to consider $\tau(p)\ne 0$ for prime $p\geq 2$. This is immediately implies by the multiplicative properties of the Fourier coefficients in \hyperlink{lem9922TCM.012}{Lemma} \ref{lem9922TCM.012}. The previous argument for this reduction as in {\color{red}\cite[Theorem 2]{LD1947}} is more complicated.

\begin{thm}\label{thm9988LC.500W}\hypertarget{thm9988LC.500W}The $n$th coefficient  $\tau(n)\ne 0$ of the discriminat function $\Delta(q)=\sum_{n\geq1}\tau(n)q^n$ does not vanishes for all integers $n\in\N$.
\end{thm}

\begin{proof}[\textbf{Proof}]  
Suppose that there exists a large prime $p\geq p_0>8\times 10^{23}$ such that
\begin{align}\label{eq5599NTF.500W02}
	\tau(p)&=0. 
\end{align}
Next, observe that for any prime $p$, the coefficient $\tau(p^6)$ has an expansion as a polynomial in $p$ and $\tau(p)$:
\begin{align}\label{eq5599NTF.500W04}
	\tau(p^6)&= \tau(p)^6-5p^{11}\tau(p)^4 +6p^{11}\tau(p)^2-p^{33}, 
\end{align}
see \hyperlink{exa9922TCM12.044d}{Example }\ref{exa9922TCM12.044d}. Consequently, the hypothesis \eqref{eq5599NTF.500W02} and \eqref{eq5599NTF.500W04} imply that \begin{align}\label{eq5599NTF.500W06}
	\tau(p^6)&= -p^{33}. 
\end{align}
Now, evaluating the formula for the number of representations of $p^6$ as sum of 24 squares, see \hyperlink{prop1234NRI.140}{Proposition} \ref{prop1234NRI.140}, yields
\begin{align}\label{eq5599NTF.500W08}
	r_{24}(n)&=\frac{16}{691}\sum_{d\mid n}(-1)^{n+d}d^{11} +\frac{128}{691}\left((-1)^{n-1}259\tau(n)-512\tau(n/2) \right)\\[.3cm]
	r_{24}(p^6)&=\frac{16}{691}\left( 1+p^{11}+p^{22}+p^{33}+p^{44}+p^{55}+p^{66}\right)+\frac{128}{691}\left(259\tau(p^6)-512\tau(p^6/2) \right)\nonumber\\[.3cm]
	&=\frac{16}{691}\left( 1+p^{11}+p^{22}+p^{33}+p^{44}+p^{55}+p^{66}\right)-\frac{128\cdot 259}{691}p^{33},\nonumber
\end{align}
where $\tau(n/2)=\tau(p^6/2)=0$ for any odd integer $n=p^6$. Rearranging \eqref{eq5599NTF.500W08} yields
\begin{align}\label{eq5599NTF.500W12}
	\frac{691}{16}\cdot r_{24}(p^6)&=\left( 1+p^{11}+p^{22}+p^{33}+p^{44}+p^{55}+p^{66}\right)-8\cdot 259p^{33}\\[.3cm]
	&=1+p^{11}+p^{22}-2071p^{33}+p^{44}+p^{55}+p^{66}\nonumber.
\end{align}
Replacing $x=p^{11}$ and reducing it modulo 691 yield the polynomial equation
\begin{align}\label{eq5599NTF.500W14}
	1+x+x^{2}-674x^{3}+x^{4}+x^{5}+x^{6}\equiv 0\bmod 691.
\end{align}
An exustive search or a computer algebra system immediate shows that the polynomial equation has no solution in $\F_{691}$. Hence, it follows that the hypothetical prime $p>8\times 10^{23}$ in hypothesis \eqref{eq5599NTF.500W02} does not exists.
\end{proof}

This result has intersting application to the arithmetic properties of the coefficients of some mock modular forms and to spherical $t$-design theory, but these topics are beyond the level of this work. 
%SSSSSSSSSSSSSSSSSSSSSSSSSSSSSSSSSSSSSSSSSSSSSSSSSSSSSSSSSSSSSSSSSSSSSSSSSSSSSSSSSSS
%SSSSSSSSSSSSSSSSSSSSSSSSSSSSSSSSSSSSSSSSSSSSSSSSSSSSSSSSSSSSSSSSSSSSSSSSSSSSSSSSSSS
%SSSSSSSSSSSSSSSSSSSSSSSSSSSSSSSSSSSSSSSSSSSSSSSSSSSSSSSSSSSSSSSSSSSSSSSSSSSSSSSSSSS
%SSSSSSSSSSSSSSSSSSSSSSSSSSSSSSSSSSSSSSSSSSSSSSSSSSSSSSSSSSSSSSSSSSSSSSSSSSSSSSSSSSS
%SSSSSSSSSSSSSSSSSSSSSSSSSSSSSSSSSSSSSSSSSSSSSSSSSSSSSSSSSSSSSSSSSSSSSSSSSSSSSSSSSSS
%SSSSSSSSSSSSSSSSSSSSSSSSSSSSSSSSSSSSSSSSSSSSSSSSSSSSSSSSSSSSSSSSSSSSSSSSSSSSSSSSSSS
%SSSSSSSSSSSSSSSSSSSSSSSSSSSSSSSSSSSSSSSSSSSSSSSSSSSSSSSSSSSSSSSSSSSSSSSSSSSSSSSSSSS
\section{Finite Sum of Fourier Coefficients} \label{S8285STF-S}\hypertarget{S8285TSTF-S}
The summatory function of the Fourier coefficients
of cusp form $f(s)=\sum_{n\geq1}a(n)n^{-s}$ satisfies 
\begin{equation}\label{eq8285STF.200a02}
	\sum_{n \leq x} a(n)=O \left (x^{(k-1)/2+1/3} \right ),
\end{equation}
the proof appears in \cite{HJ1989}. In comparison, the classical conjecture states that
\begin{equation}\label{eq8285STF.200a04}
	\sum_{n \leq x} a(n)=O \left (x^{(k-1)/2+1/4+\epsilon} \right ),
\end{equation}

%SSSSSSSSSSSSSSSSSSSSSSSSSSSSSSSSSSSSSSSSSSSSSSSSSSSSSSSSSSSSSSSSSSSSSSSSSSSSSSSSSSS
%SSSSSSSSSSSSSSSSSSSSSSSSSSSSSSSSSSSSSSSSSSSSSSSSSSSSSSSSSSSSSSSSSSSSSSSSSSSSSSSSSSS
%SSSSSSSSSSSSSSSSSSSSSSSSSSSSSSSSSSSSSSSSSSSSSSSSSSSSSSSSSSSSSSSSSSSSSSSSSSSSSSSSSSS
%SSSSSSSSSSSSSSSSSSSSSSSSSSSSSSSSSSSSSSSSSSSSSSSSSSSSSSSSSSSSSSSSSSSSSSSSSSSSSSSSSSS
%SSSSSSSSSSSSSSSSSSSSSSSSSSSSSSSSSSSSSSSSSSSSSSSSSSSSSSSSSSSSSSSSSSSSSSSSSSSSSSSSSSS
%SSSSSSSSSSSSSSSSSSSSSSSSSSSSSSSSSSSSSSSSSSSSSSSSSSSSSSSSSSSSSSSSSSSSSSSSSSSSSSSSSSS
%SSSSSSSSSSSSSSSSSSSSSSSSSSSSSSSSSSSSSSSSSSSSSSSSSSSSSSSSSSSSSSSSSSSSSSSSSSSSSSSSSSS
\section{Average Order of Sum of Absolute Tau Function}\label{S8282TSTF-C}\hypertarget{S8282TSTF-C}
The \textit{Mobius randomness principle} anticipates a sizable cancellations for any twisted finite sum $\sum_{n\le x}\mu(n)f(n)$. The actual cancellation is determined by the zerofree region and weight $k\geq2$ of the modular form $f(s)=\sum_{n\geq1}a(n)n^{-s}$. The result below attempts to compute the average order for the twisted finite sum with $f(n)=\tau(n)$.

\begin{prop} \label{prop8282TSTF.012C}\hypertarget{prop8282TSTF.012C} If $x>1$ is a large real number, then
	\begin{equation}\sum_{n \leq x} | \tau(n)|=O \left (x^{13/2} \log x \right ).
	\end{equation}
\end{prop}

\begin{proof}[\textbf{Proof}]  (i) By Deligne theorem, $|\tau(n)\leq d(n)n^{11/2}$. Let $A(t)=\sum_{n\leq t}d(n)=x\log x+O(x)$. Summing this inequality over the interval $[1,x]$ yields
	\begin{eqnarray}
		\sum_{n \leq x} | \tau(n)| &\leq & \sum_{n \leq x} d(n)n^{11/2}? \\[.3cm]
		& \leq&\int_1^x t^{11/2}d\;A(t)\nonumber\\[.3cm]
		&\ll& x^{13/2}\log x\nonumber.
	\end{eqnarray}
\end{proof}
%SSSSSSSSSSSSSSSSSSSSSSSSSSSSSSSSSSSSSSSSSSSSSSSSSSSSSSSSSSSSSSSSSSSSSSSSSSSSSSSSSSS
%SSSSSSSSSSSSSSSSSSSSSSSSSSSSSSSSSSSSSSSSSSSSSSSSSSSSSSSSSSSSSSSSSSSSSSSSSSSSSSSSSSS
%SSSSSSSSSSSSSSSSSSSSSSSSSSSSSSSSSSSSSSSSSSSSSSSSSSSSSSSSSSSSSSSSSSSSSSSSSSSSSSSSSSS
%SSSSSSSSSSSSSSSSSSSSSSSSSSSSSSSSSSSSSSSSSSSSSSSSSSSSSSSSSSSSSSSSSSSSSSSSSSSSSSSSSSS
%SSSSSSSSSSSSSSSSSSSSSSSSSSSSSSSSSSSSSSSSSSSSSSSSSSSSSSSSSSSSSSSSSSSSSSSSSSSSSSSSSSS
%SSSSSSSSSSSSSSSSSSSSSSSSSSSSSSSSSSSSSSSSSSSSSSSSSSSSSSSSSSSSSSSSSSSSSSSSSSSSSSSSSSS
%SSSSSSSSSSSSSSSSSSSSSSSSSSSSSSSSSSSSSSSSSSSSSSSSSSSSSSSSSSSSSSSSSSSSSSSSSSSSSSSSSSS
\section{Twisted Sum of Tau Function}\label{S8282TSTF-S}\hypertarget{S8282TSTF-S}
The \textit{Mobius randomness principle} anticipates a sizable cancellations for any twisted finite sum $\sum_{n\le x}\mu(n)f(n)$. The actual cancellation is determined by the zerofree region and weight $k\geq2$ of the modular form $f(s)=\sum_{n\geq1}a(n)n^{-s}$. The result below attempts to compute the average order for the twisted finite sum with $f(n)=\tau(n)$.

\begin{prop} \label{prop8282TSTF.012}\hypertarget{prop8282TSTF.012} If $x>1$ is a large real number, then
	\begin{enumerate}[font=\normalfont, label=(\roman*)]
		\item $\displaystyle \sum_{n \leq x} \mu(n) \tau(n)=O \left (x^{6} \log^{2}x \right )$,\tabto{7cm}unconditionally.
		\item $\displaystyle \sum_{n \leq x} \mu(n) \tau(n)=O \left (x^{6-1/4} \log^{2}x \right )$, \tabto{7cm}condtional on the generalized RH.
	\end{enumerate}
\end{prop}

\begin{proof}[\textbf{Proof}]   First derive some form of representative for the generating funcction:
\begin{eqnarray}
	\sum_{n \geq 1} \frac{\mu(n) \tau(n)}{n^s} &=& \prod_{p \geq 2} \left ( 1- \frac{\tau(p)}{p^s} \right ) \\[.3cm]
	& =&\frac{1}{L(s,\tau)}   \prod_{p \geq 2} \left ( 1- \frac{\tau(p)}{p^s} \right )  \left ( \frac{1}{p^{11-2s}-\tau(p)p^{-s}+1} \right )\nonumber\\[.3cm]
	&=& \frac{1}{L(s,\tau)}   \prod_{p \geq 2} \left ( 1-  \frac{p^{11-s}}{p^{11-s}-\tau(p)+p^{s}} \right ) \nonumber\\[.3cm]
	&=& \frac{1}{L(s,\tau)}  \prod_{p \geq 2} \left ( 1-  \frac{1}{p^{2s-11}-\tau(p)p^{s-1}+1} \right )\nonumber\\[.3cm]
	&=& \frac{g(s)}{L(s,\tau)}\nonumber.
\end{eqnarray}

Since the last product, denoted by $g(s)$, is absolutely convergent for $\Re e(s)>(k-1)/2$, it is a holomorphic function of $s \in \mathbb{C}.$ The $L$-function attached to a cusp form of weight $k \geq 1$,  

\begin{equation}
	L(s, \gamma)=\sum_{n \geq 1} \frac{a(n)}{n^s}
\end{equation}
is uniformly and absolutely convergent on the complex half plane $ \Re e(s)>k/2+1$, see {\color{red}\cite[p. \;260]{ZD1999}}. An application of the Perron formula completes the proof.
\end{proof}

%SSSSSSSSSSSSSSSSSSSSSSSSSSSSSSSSSSSSSSSSSSSSSSSSSSSSSSSSSSSSSSSSSSSSSSSSSSSSSSSSSSS
%SSSSSSSSSSSSSSSSSSSSSSSSSSSSSSSSSSSSSSSSSSSSSSSSSSSSSSSSSSSSSSSSSSSSSSSSSSSSSSSSSSS
%SSSSSSSSSSSSSSSSSSSSSSSSSSSSSSSSSSSSSSSSSSSSSSSSSSSSSSSSSSSSSSSSSSSSSSSSSSSSSSSSSSS
%SSSSSSSSSSSSSSSSSSSSSSSSSSSSSSSSSSSSSSSSSSSSSSSSSSSSSSSSSSSSSSSSSSSSSSSSSSSSSSSSSSS
%SSSSSSSSSSSSSSSSSSSSSSSSSSSSSSSSSSSSSSSSSSSSSSSSSSSSSSSSSSSSSSSSSSSSSSSSSSSSSSSSSSS
%SSSSSSSSSSSSSSSSSSSSSSSSSSSSSSSSSSSSSSSSSSSSSSSSSSSSSSSSSSSSSSSSSSSSSSSSSSSSSSSSSSS
%SSSSSSSSSSSSSSSSSSSSSSSSSSSSSSSSSSSSSSSSSSSSSSSSSSSSSSSSSSSSSSSSSSSSSSSSSSSSSSSSSSS
\section{Estimate for the Vanishing Set of Primes} \label{S9810PVC}\hypertarget{S9810PVC}
The hypothetical subset of primes on which the tau function vanishes can estimate both unconditionally and conditionally.

\begin{prop}\label{prop9810PVC.300}\hypertarget{prop9810PVC.300} If $x$ is a large nonnegative real number and $\pi_{\tau}(x)=\#\{p\leq x: \tau(p)=0\}$ then
	\begin{enumerate}[font=\normalfont, label=(\roman*)]
	\item $\displaystyle \pi_{\tau}(x)\ll x(\log x)^{3/2}$,\tabto{7cm}unconditionally.
	\item $\displaystyle \pi_{\tau}(x)\ll  x^{3/4}$, \tabto{7cm}condtional on the generalized RH.
\end{enumerate}
\end{prop}	
\begin{proof}[\textbf{Proof}]   This result appears in \cite{SJ2003}.
\end{proof}

%SSSSSSSSSSSSSSSSSSSSSSSSSSSSSSSSSSSSSSSSSSSSSSSSSSSSSSSSSSSSSSSSSSSSSSSSSSSSSSSSSSS
%SSSSSSSSSSSSSSSSSSSSSSSSSSSSSSSSSSSSSSSSSSSSSSSSSSSSSSSSSSSSSSSSSSSSSSSSSSSSSSSSSSS
%SSSSSSSSSSSSSSSSSSSSSSSSSSSSSSSSSSSSSSSSSSSSSSSSSSSSSSSSSSSSSSSSSSSSSSSSSSSSSSSSSSS
%SSSSSSSSSSSSSSSSSSSSSSSSSSSSSSSSSSSSSSSSSSSSSSSSSSSSSSSSSSSSSSSSSSSSSSSSSSSSSSSSSSS
%SSSSSSSSSSSSSSSSSSSSSSSSSSSSSSSSSSSSSSSSSSSSSSSSSSSSSSSSSSSSSSSSSSSSSSSSSSSSSSSSSSS
%SSSSSSSSSSSSSSSSSSSSSSSSSSSSSSSSSSSSSSSSSSSSSSSSSSSSSSSSSSSSSSSSSSSSSSSSSSSSSSSSSSS
%SSSSSSSSSSSSSSSSSSSSSSSSSSSSSSSSSSSSSSSSSSSSSSSSSSSSSSSSSSSSSSSSSSSSSSSSSSSSSSSSSSS
\section{Convolutions of Sum of Divisors Functions} \label{S9810DFC}\hypertarget{S9810DFC}
For a pair of integers $n\geq1$ and $a\geq 0$ the sum of divisors function is defined by
\begin{equation}\label{eq9810.600d}
	\sigma_s(n)=\sum_{d\mid n} d^{s}.
\end{equation}	
For an integer parameter $r\geq 0$ the convolution sum is defined by
\begin{equation}\label{eq9810.600f}
	S_a(r,n)=\sum_{0< k< n} k^{r}\sigma_a(k)\sigma_a(n-k).
\end{equation}	
The corresponding recursive formula is given by the finite sum
\begin{equation}\label{eq9810.600e}
	S_a(r,n)=\sum_{0\leq k\leq r} (-1)^{k}n^{r-k}\binom{r}{k}S_a(k,n).
\end{equation}	

\begin{eqnarray}\label{eq9810.600g}
	S_1(4,n)&=&\sum_{0\leq k\leq 4} (-1)^{k}n^{4-k}\binom{4}{k}S_1(k,n)\\
	&=&n^4S_1(0,n)-4n^3S_1(3,n)+6n^2S_1(2,n)-4nS_1(4,n)	\nonumber\\.
\end{eqnarray}

\begin{lem} \label{lem9810.600} \hypertarget{lem9810.600}{\normalfont(Niebur formula)} Let $n\geq1$ be an integer and let $\tau:\N\longrightarrow \Z$ be the tau function. Then 
	\begin{align}\label{eq9810.600i}
		\tau(n) &= n^4\sigma(n)-24\left( 35S_1(4,n)-52nS_1(3,n)+18n^2S_1(2,n)\right)\\[.2cm]
		&= n^4\sigma(n)-24\sum_{1\leq k\leq n-1}\left( 35k^4-52k^3n+18k^2n^2\right)\sigma(k)\sigma(n-k).
	\end{align}	
\end{lem}
\begin{proof}[\textbf{Proof}]   A complete proof and other references appears in {\color{red}\cite[Section 3]{GL2010}}.
\end{proof}

\begin{lem} \label{lem9810.700} \hypertarget{lem9810.700}{\normalfont(Convolution formulas)} Let $p\geq2$ be a prime number and let $\tau:\N\longrightarrow \Z$ be the tau function. Then 
	\begin{enumerate}[font=\normalfont, label=(\roman*)]
		\item 
		$\displaystyle S_1(0, p) = \frac{1}{12}(p-1)(5p-6)(p + 1)$.
		
		\item 
		$\displaystyle S_1(1, p) = \frac{1}{24}(p-1)(5p-6)(p + 1)p$.
		
		\item 
		$\displaystyle S_1(2, p) = \frac{1}{24}(p-1)(3p-4)(p + 1)p^2$. 
		
		\item 
		$\displaystyle S_1(3, p) = \frac{1}{24}(p-1)(2p-3)(p + 1)p^3$. 
		
		\item 
		$\displaystyle S_1(4, p) = \frac{1}{840}\left( (50p^2 - 134p + 85)(p + 1)p^4-\tau(p)\right) $. 
	\end{enumerate}
\end{lem}

\begin{proof}[\textbf{Proof}]   The complete proofs and references appears in \cite{GL2010}.
\end{proof}

%SSSSSSSSSSSSSSSSSSSSSSSSSSSSSSSSSSSSSSSSSSSSSSSSSSSSSSSSSSSSSSSSSSSSSSSSSSSSSSSSSSS
%SSSSSSSSSSSSSSSSSSSSSSSSSSSSSSSSSSSSSSSSSSSSSSSSSSSSSSSSSSSSSSSSSSSSSSSSSSSSSSSSSSS
%SSSSSSSSSSSSSSSSSSSSSSSSSSSSSSSSSSSSSSSSSSSSSSSSSSSSSSSSSSSSSSSSSSSSSSSSSSSSSSSSSSS
%SSSSSSSSSSSSSSSSSSSSSSSSSSSSSSSSSSSSSSSSSSSSSSSSSSSSSSSSSSSSSSSSSSSSSSSSSSSSSSSSSSS
%SSSSSSSSSSSSSSSSSSSSSSSSSSSSSSSSSSSSSSSSSSSSSSSSSSSSSSSSSSSSSSSSSSSSSSSSSSSSSSSSSSS
%SSSSSSSSSSSSSSSSSSSSSSSSSSSSSSSSSSSSSSSSSSSSSSSSSSSSSSSSSSSSSSSSSSSSSSSSSSSSSSSSSSS
%SSSSSSSSSSSSSSSSSSSSSSSSSSSSSSSSSSSSSSSSSSSSSSSSSSSSSSSSSSSSSSSSSSSSSSSSSSSSSSSSSSS
\section{Estimates for the Moments of Convolutions}
\label{S9090MCDF-S}\hypertarget{S9090MCDF-S}
The $m$th moment of the convolution of the sum of divisors function is defined by
\begin{equation}\label{eq9090MCDF.111f2}
\sum_{1\leq k\leq n-1}k^m\sigma(k)\sigma(n-k).
\end{equation}	
The basic details for a pair of basic estimates of the $m$th moment are outlined here. 
%LLLLLLLLLLLLLLLLLLLLLLLLLLLLLLLLLLLLLLLLLLLLLLLLLLLLLLLLLLLLLLLLLLLLLLLLLLL
%LLLLLLLLLLLLLLLLLLLLLLLLLLLLLLLLLLLLLLLLLLLLLLLLLLLLLLLLLLLLLLLLLLLLLLLLLL
\begin{lem} \label{lem9090MCDF.111-D}\hypertarget{lem9090MCDF.111-D} If $m,n\geq 1$ are a pair of integers, then 

\begin{enumerate}[font=\normalfont, label=(\roman*)]
\item $\displaystyle \sum_{1\leq k\leq n-1}k\sigma(k)\sigma(n-k)\geq \frac{n^4}{12}-\frac{n^2}{4}+\frac{n}{6}$,
\item $\displaystyle \sum_{1\leq k\leq n-1}k^2\sigma(k)\sigma(n-k)\geq \frac{n^5}{20}-\frac{n^3}{12}+\frac{n}{5}$,
\item $\displaystyle \sum_{1\leq k\leq n-1}k^3\sigma(k)\sigma(n-k)\geq  \frac{n^6}{30}-\frac{n^4}{12}+\frac{n^2}{20}$,
\item $\displaystyle \sum_{1\leq k\leq n-1}k^4\sigma(k)\sigma(n-k)\geq\frac{n^7}{42}-\frac{n^5}{6}+\frac{n^4}{3}+\frac{2n^3}{15}-\frac{n}{42}$.
\end{enumerate}
\end{lem}

\begin{proof}[\textbf{Proof}]  (i) The sum of divisors function has the trivial lower bound $\sigma(k)\geq k$, so 
\begin{align}\label{eq9090MCDF.111f6}
\sum_{1\leq k\leq n-1}k\sigma(k)\sigma(n-k)&\geq\sum_{1\leq k\leq n-1}k\cdot  k\cdot (n-k)\\[.3cm]
&\geq n\sum_{1\leq k\leq n-1}k^2-\sum_{1\leq k\leq n-1}k^3 \nonumber\\[.3cm]
		&\geq n\left( \frac{n^3}{3}+\frac{n^2}{2}+\frac{n}{6}\right) -\left( \frac{n^4}{4}+\frac{n^3}{2}+\frac{n^2}{4}\right) \nonumber\\[.3cm].
		&\geq \frac{n^4}{12}-\frac{n^2}{4}+\frac{n}{6} \nonumber.
	\end{align}	
(ii) The sum of divisors function has the trivial lower bound $\sigma(k)\geq k$, so 
\begin{align}\label{eq9090MCDF.111f10}
	\sum_{1\leq k\leq n-1}k^2\sigma(k)\sigma(n-k)&\geq\sum_{1\leq k\leq n-1}k^2\cdot  k\cdot (n-k)\\[.3cm]
	&\geq n\sum_{1\leq k\leq n-1}k^3-\sum_{1\leq k\leq n-1}k^4 \\[.3cm]
	&\geq n\left( \frac{n^4}{4}+\frac{n^3}{2}+\frac{n^2}{4}\right) -\left( \frac{n^5}{5}+\frac{n^4}{2}+\frac{n^3}{3}-\frac{n}{30}\right) \nonumber\\[.3cm].
	&\geq \frac{n^5}{20}-\frac{n^3}{12}+\frac{n}{5} \nonumber.
\end{align}	
(iii) The sum of divisors function has the trivial lower bound $\sigma(k)\geq k$, so 
\begin{align}\label{eq9090MCDF.111f13}
	\sum_{1\leq k\leq n-1}k^3\sigma(k)\sigma(n-k)&\geq\sum_{1\leq k\leq n-1}k^3\cdot  k\cdot (n-k)\\[.3cm]
	&\geq n\sum_{1\leq k\leq n-1}k^4-\sum_{1\leq k\leq n-1}k^5 \\[.3cm]
	&\geq n\left( \frac{n^5}{5}+\frac{n^4}{2}+\frac{n^3}{3}-\frac{n}{30}\right) -\left( \frac{n^6}{6}+\frac{n^5}{2}+\frac{5n^4}{12}-\frac{n^2}{12}\right) \nonumber\\[.3cm].
	&\geq \frac{n^6}{30}-\frac{n^4}{12}+\frac{n^2}{20} \nonumber.
\end{align}	
(iv) The sum of divisors function has the trivial lower bound $\sigma(k)\geq k$, so 
\begin{align}\label{eq9090MCDF.111f15}
	\sum_{1\leq k\leq n-1}k^4\sigma(k)\sigma(n-k)&\geq\sum_{1\leq k\leq n-1}k^4\cdot  k\cdot (n-k)\\[.3cm]
	&\geq n\sum_{1\leq k\leq n-1}k^5-\sum_{1\leq k\leq n-1}k^6 \\[.3cm]
	&\geq n\left( \frac{n^6}{6}+\frac{n^5}{2}+\frac{n^4}{3}-\frac{n^2}{30}\right) -\left( \frac{n^7}{7}+\frac{n^6}{2}+\frac{n^5}{2}-\frac{n^3}{6}+\frac{n}{42}\right) \nonumber\\[.3cm].
	&\geq \frac{n^7}{42}-\frac{n^5}{6}+\frac{n^4}{3}+\frac{2n^3}{15}-\frac{n}{42} \nonumber.
\end{align}	
		
\end{proof}

%SSSSSSSSSSSSSSSSSSSSSSSSSSSSSSSSSSSSSSSSSSSSSSSSSSSSSSSSSSSSSSSSSSSSSSSSSSSSSSSSSSS
%SSSSSSSSSSSSSSSSSSSSSSSSSSSSSSSSSSSSSSSSSSSSSSSSSSSSSSSSSSSSSSSSSSSSSSSSSSSSSSSSSSS
%SSSSSSSSSSSSSSSSSSSSSSSSSSSSSSSSSSSSSSSSSSSSSSSSSSSSSSSSSSSSSSSSSSSSSSSSSSSSSSSSSSS
%SSSSSSSSSSSSSSSSSSSSSSSSSSSSSSSSSSSSSSSSSSSSSSSSSSSSSSSSSSSSSSSSSSSSSSSSSSSSSSSSSSS
%SSSSSSSSSSSSSSSSSSSSSSSSSSSSSSSSSSSSSSSSSSSSSSSSSSSSSSSSSSSSSSSSSSSSSSSSSSSSSSSSSSS
%SSSSSSSSSSSSSSSSSSSSSSSSSSSSSSSSSSSSSSSSSSSSSSSSSSSSSSSSSSSSSSSSSSSSSSSSSSSSSSSSSSS
%SSSSSSSSSSSSSSSSSSSSSSSSSSSSSSSSSSSSSSSSSSSSSSSSSSSSSSSSSSSSSSSSSSSSSSSSSSSSSSSSSSS
\section{Numerical Calculations of the Tau Function}\label{S1234NCT-S}\hypertarget{S1234NCT-S}
The calculations of a few small values of $\tau(n)$ using the divisor convolution functions are demeonstred below.

%TTTTTTTTTTTTTTTTTTTTTTTTTTTTTTTTTTTTTTTTTTTTTTTTTTTTTTTTTTTTTTTTTTTTTTTTTTTTT
\vskip .1 in 
\begin{table}[H]
	\setlength{\tabcolsep}{0.250cm}
	\renewcommand{\arraystretch}{1.50092}
	\setlength{\arrayrulewidth}{0.75pt}
	\centering
	\begin{tabular}{r|r|r|r|r|r|r|r}
		$p$	& $\tau(p)$ & $p$	& $\tau(p)$  &  $p$	& $\tau(p)$ &  $p$	& $\tau(p)$\\
		\hline
		1	& 1 & 17 &$-6905934$  &43  & $-17125708$&79& 38116845680\\
		\hline
		2	& $-24$ &19  &10661420  &47  &2687348496 &83&$ -29335099668$ \\
		\hline
		3	& 252 & 23 & 18643272 &59  &$-5189203740 $ &89&$-24992917110$\\
		\hline
		5	&4830  & 29 & 128406630 &61  & 6956478662&97&75013568546 \\
		\hline
		7	& $-16744$ & 31 & $-52843168$ & 67 &$-15481826884$&101  &81742959102\\
		\hline
		11	&534612  & 37 &$-182213314$  & 71 & 9791485272  &103&$-225755128648$\\
		\hline
		13	&$-577738$  & 41 &308120442  & 73 &1463791322 &107&90241258356 
	\end{tabular}
	\vskip .1 in		
	\caption{Small Values of the Tau Function}
	\label{table100-2}
\end{table}
%TTTTTTTTTTTTTTTTTTTTTTTTTTTTTTTTTTTTTTT

A large table of the values $\tau(n)$, $n\leq 16091$, is archived at \href{https://oeis.org/A000594/b000594.txt}{Table}. A demonstration of a technique for computing small values is rendered below.
%EEEEEEEEEEEEEEEEEEEEEEEEEEEEEEEEEEEEEEEEEEEEEEEEEEEEEEEEEE
\begin{exa}
	{\normalfont For $p=2$, the evaluation of the formula gives
		\begin{eqnarray}
			\tau(p)& =& -\frac{1}{2}(p)^2\sigma_{7}(p) + \frac{3}{2}(p)^2\sigma_{3}(p)+\frac{360}{p}S_3(3, p)\\[.2cm]
			& =& -\frac{1}{2}2^2\left( 2^7+1\right)  + \frac{3}{2}2^2\left(2^3+1 \right) +\frac{360}{2}S_3(3, p)\nonumber\\[.2cm]
			& =&-204+ 180(1)\nonumber\\
			& =&-24\nonumber		
		\end{eqnarray}
		since $S_3(3, p)=\sum_{1\leq k<p}k^3\sigma_3(k)\sigma_3(p-k)=1$.
	}
\end{exa}

%EEEEEEEEEEEEEEEEEEEEEEEEEEEEEEEEEEEEEEEEEEEEEEEEEEEEEEEEEE
\begin{exa}
	{\normalfont For $p=3$, the evaluation of the formula gives
		\begin{eqnarray}
			\tau(p)& =& -\frac{1}{2}\sigma_{7}(p) + \frac{3}{2}(p)^2\sigma_{3}(p)+\frac{360}{p}S_3(3, p)\\[.2cm]
			& =& -\frac{1}{2}3^3\left( 3^7+1\right)  + \frac{3}{2}3^2\left(3^3+1 \right) +\frac{360}{3}S_3(3, p)\nonumber\\[.2cm]
			& =&-9468+ 120(81)\nonumber\\
			& =&252\nonumber		
		\end{eqnarray}
		since $S_3(3, p)=\sum_{1\leq k<p}k^3\sigma_3(k)\sigma_3(p-k)=81$.
	}
\end{exa}

%EEEEEEEEEEEEEEEEEEEEEEEEEEEEEEEEEEEEEEEEEEEEEEEEEEEEEEEEEE
\begin{exa}
	{\normalfont For $p=5$, the evaluation of the formula gives
		\begin{eqnarray}
			\tau(p)& =& -\frac{1}{2}(p)^2\sigma_{7}(p) + \frac{3}{2}(p)^2\sigma_{3}(p)+\frac{360}{p}S_3(3, p)\\[.2cm]
			& =& -\frac{1}{2}5^2\left( 5^7+1\right)  + \frac{3}{2}5^2\left(5^3+1 \right) +\frac{360}{5}S_3(3, p)\nonumber\\[.2cm]
			& =& +120(13565)\nonumber\\
			& =&4830\nonumber		
		\end{eqnarray}
		since $S_3(3, p)=\sum_{1\leq k<p}k^3\sigma_3(k)\sigma_3(p-k)=13565$.
	}
\end{exa}
%SSSSSSSSSSSSSSSSSSSSSSSSSSSSSSSSSSSSSSSSSSSSSSSSSSSSSSSSSSSSSSSSSSSSSSSSSSSSSSSSSSS
%SSSSSSSSSSSSSSSSSSSSSSSSSSSSSSSSSSSSSSSSSSSSSSSSSSSSSSSSSSSSSSSSSSSSSSSSSSSSSSSSSSS
%SSSSSSSSSSSSSSSSSSSSSSSSSSSSSSSSSSSSSSSSSSSSSSSSSSSSSSSSSSSSSSSSSSSSSSSSSSSSSSSSSSS
%SSSSSSSSSSSSSSSSSSSSSSSSSSSSSSSSSSSSSSSSSSSSSSSSSSSSSSSSSSSSSSSSSSSSSSSSSSSSSSSSSSS
%SSSSSSSSSSSSSSSSSSSSSSSSSSSSSSSSSSSSSSSSSSSSSSSSSSSSSSSSSSSSSSSSSSSSSSSSSSSSSSSSSSS
%SSSSSSSSSSSSSSSSSSSSSSSSSSSSSSSSSSSSSSSSSSSSSSSSSSSSSSSSSSSSSSSSSSSSSSSSSSSSSSSSSSS
%SSSSSSSSSSSSSSSSSSSSSSSSSSSSSSSSSSSSSSSSSSSSSSSSSSSSSSSSSSSSSSSSSSSSSSSSSSSSSSSSSSS
\section{Bernoulli numbers} \label{S5599BN-BN}\hypertarget{S5599BN-BN}
A small table of Bernoulli numbers is included here as a reference. These rational numbers are the coefficients of the power series

\begin{equation}
	\frac{te^t}{e^t-1}=\sum_{n\geq0}B_n\frac{t^n}{n!}.
\end{equation}
%TTTTTTTTTTTTTTTTTTTTTTTTTTTTTTTTTTTTTTTTTTTTTTTTTTTTTTTTTTTTTTTTTTTTTTTTTT
\vskip .1in
\begin{table}[H]
	\setlength{\tabcolsep}{0.75cm}
	\renewcommand{\arraystretch}{1.750092}
	\setlength{\arrayrulewidth}{0.750pt}
	\centering
	
	\begin{tabular}{r|r|r|r|r|r}
		$n$&$B_{n}$&$n$&$B_{n}$&$n$&$B_{n}$\\
		\hline
		$0$ &$1$&$8$&$-\frac{1}{30}$&$18$&$-\frac{43867}{798}$\\
		\hline
		$1$ &$-\frac{1}{2}$&$10$&$-\frac{5}{66}$&$20$&$-\frac{174611}{330}$\\
		\hline
		$2$ &$\frac{1}{6}$&$12$&$-\frac{691}{2370}$&$22$&$\frac{854513}{138}$\\
		\hline
		$4$ &$-\frac{1}{30}$&$14$&$\frac{7}{6}$&$24$&$-\frac{236364091}{2730}$\\
		\hline
		$6$ &$\frac{1}{42}$&$16$&$-\frac{3617}{510}$&$26$&$\frac{8553103}{6}$\\
	\end{tabular}
	\caption{List of Bernoulli Numbers $B_{n}$} \label{t901}
\end{table}
\vskip .1in
%TTTTTTTTTTTTTTTTTTTTTTTTTTTTTTTTTTTTTTTTTTTTTTTTTTTTTTTTTTTTTTTTTTTTTTTTTT
A large and more detailed table appears in {\color{red}\cite[Table 24.2.3]{DLMF}}.
%\newpage

%PPPPPPPPPPPPPPPPPPPPPPPPPPPPPPPPPPPPPPPPPPPPPPPPPPPPPPPPPPPPPPPPPPPPPPPPPPPPPPPPPPPPPPPPP
%PPPPPPPPPPPPPPPPPPPPPPPPPPPPPPPPPPPPPPPPPPPPPPPPPPPPPPPPPPPPPPPPPPPPPPPPPPPPPPPPPPPPPPPPP
%PPPPPPPPPPPPPPPPPPPPPPPPPPPPPPPPPPPPPPPPPPPPPPPPPPPPPPPPPPPPPPPPPPPPPPPPPPPPPPPPPPPPPPPPP
%PPPPPPPPPPPPPPPPPPPPPPPPPPPPPPPPPPPPPPPPPPPPPPPPPPPPPPPPPPPPPPPPPPPPPPPPPPPPPPPPPPPPPPPPP
%PPPPPPPPPPPPPPPPPPPPPPPPPPPPPPPPPPPPPPPPPPPPPPPPPPPPPPPPPPPPPPPPPPPPPPPPPPPPPPPPPPPPPPPPP
%PPPPPPPPPPPPPPPPPPPPPPPPPPPPPPPPPPPPPPPPPPPPPPPPPPPPPPPPPPPPPPPPPPPPPPPPPPPPPPPPPPPPPPPPP
%PPPPPPPPPPPPPPPPPPPPPPPPPPPPPPPPPPPPPPPPPPPPPPPPPPPPPPPPPPPPPPPPPPPPPPPPPPPPPPPPPPPPPPPPP
%PPPPPPPPPPPPPPPPPPPPPPPPPPPPPPPPPPPPPPPPPPPPPPPPPPPPPPPPPPPPPPPPPPPPPPPPPPPPPPPPPPPPPPPPP
\section{Problems}\label{p9988}

\subsection{Techniques for Computing $\tau(n)$}
%EEEEEEEEEEEEEEEEEEEEEEEEEEEEEEEEEEEEEEEEEEEEEEEEEEEEEEEEEEEEEEEEEEEEEEEEEEEEEEEE
\begin{exe} {\normalfont Let $r_{24}(n)\geq0$ be the number of representations of $n\geq1$ as a sum of 24 squares. 
\begin{enumerate}
\item[(a)] Use $r_{24}(1)=2\cdot 24$ to compute $\tau(1)$.
\item[(b)] Use $r_{24}(2)=2^2\cdot 23$ to compute $\tau(2)$.
\item[(c)] Use $r_{24}(1)=2^3\cdot 22$ to compute $\tau(3)$.
\item[(d)] Explain whether or not a known value $r_{24}(n)=m$ is practical for computing $\tau(n)$ with $n$ large.
\end{enumerate} 
	}
\end{exe}
\vskip .15in 
%EEEEEEEEEEEEEEEEEEEEEEEEEEEEEEEEEEEEEEEEEEEEEEEEEEEEEEEEEEEEEEEEEEEEEEEEEEEEEEEE
%EEEEEEEEEEEEEEEEEEEEEEEEEEEEEEEEEEEEEEEEEEEEEEEEEEEEEEEEEEEEEEEEEEEEEEEEEEEEEEEE
\begin{exe} {\normalfont Let $\Delta(x)=\sum_{n\geq1}\tau(n)x^{n-1}=x\prod_{n\geq}\left(1-x^n \right)^{24}$. The value $\tau(2)=-24$ be computed directly by expanding the product and matching coefficients. 
\begin{enumerate}
	\item[(a)] Compute $\tau(1)=1$ by matching coefficients.
	\item[(b)] Compute $\tau(3)$ by matching coefficients.
	\item[(c)] Show that $\tau(n)\ne-1$ by matching coefficients (if posssible). 
	\item[(d)] Explain whether or not this method is practical for computing $\tau(n)$ with $n$ large.
\end{enumerate} 
	}
\end{exe}
\vskip .15in 
%EEEEEEEEEEEEEEEEEEEEEEEEEEEEEEEEEEEEEEEEEEEEEEEEEEEEEEEEEEEEEEEEEEEEEEEEEEEEEEEE
\subsection{One-to-One Properties of the Fourier Coefficients $\lambda(n)$}
\begin{exe} {\normalfont Prove or disprove that the Ramanujan coefficient $\tau:\N \longrightarrow \Z$ is injective, (one-to-one).
	}
\end{exe}
\vskip .15in 
%EEEEEEEEEEEEEEEEEEEEEEEEEEEEEEEEEEEEEEEEEEEEEEEEEEEEEEEEEEEEEEEEEEEEEEEEEEEEEEEE
\begin{exe} {\normalfont Let $k=12, 14, 16, 18, 20, 22, 26$ be the weight, and let $N=1$ be the level of the Fourier coefficient $\lambda(n)$. Prove or disprove that the map $\lambda:\N \longrightarrow \Z$ is injective, (one-to-one).
	}
\end{exe}
\vskip .15in 
%EEEEEEEEEEEEEEEEEEEEEEEEEEEEEEEEEEEEEEEEEEEEEEEEEEEEEEEEEEEEEEEEEEEEEEEEEEEEEEEE
%EEEEEEEEEEEEEEEEEEEEEEEEEEEEEEEEEEEEEEEEEEEEEEEEEEEEEEEEEEEEEEEEEEEEEEEEEEEEEEEE
%EEEEEEEEEEEEEEEEEEEEEEEEEEEEEEEEEEEEEEEEEEEEEEEEEEEEEEEEEEEEEEEEEEEEEEEEEEEEEEEE
\subsection{Vanishing Fourier Coefficients $\lambda(n)$}
\begin{exe} {\normalfont Let $k=12, 14, 16, 18, 20, 22, 26$ be the weight, and let $N=1$ be the level of the Fourier coefficient $\lambda(n)$. Prove that $\lambda(n)\ne 0$ for $n \geq 1$.
	}
\end{exe}

\begin{exe} {\normalfont Let $k\geq 1$ be the weight, and let $N\geq1$ be the level of the Fourier coefficient $\lambda(n)$. Prove or disprove that the sign of $\lambda(n)$ is a random variable. Is there a deterministic algorithm to compute it?
	}
\end{exe}

\begin{exe} {\normalfont Let $k\geq 1$ be the weight, and let $N\geq1$ be the level of the Fourier coefficient $\lambda(n)$. Prove or disprove that the number of prime values
		$$\pi_{\lambda}(x,k,N)=\#\{\lambda(n)\leq x\}=c_{\lambda}(k,N)\log \log x+o(\log \log x,$$
		where $c_{\lambda}(k,N)\geq 0$ is a constant. The heuristic argument for $\lambda(n)=\tau(n)$ appears in \cite{LR2013}.
	}
\end{exe}
\begin{exe} {\normalfont Let $k\geq 1$ be the weight, and let $N\geq1$ be the level of the Fourier coefficient $\lambda(n)$. Estimate the maximal prime gap
		$$\left |\lambda(p^m)\right |-\left |\lambda(p^n)\right |,$$
		where $p\in \tP=\{2,3,5, \ldots\}$, and $m,n \in \N=\{1,2,3, \ldots\}$. Does it depends on the rational prime gap $p_{n+1}-p_n$? 
	}
\end{exe}

%EEEEEEEEEEEEEEEEEEEEEEEEEEEEEEEEEEEEEEEEEEEEEEEEEEEEEEEEEEEEEEEEEEEEEEEEEEEEEEEE
\begin{exe} {\normalfont Let $k\geq 1$ be the weight, and let $N\geq1$ be the level of the Fourier coefficient $\lambda(n)$, and no complex multiplication. Does $\lambda(p)\ne0$ for any prime $p\in \tP=\{2,3,5, \ldots\}$ if and only if $k\geq 4$? 
	}
\end{exe}
\vskip .15in 
%EEEEEEEEEEEEEEEEEEEEEEEEEEEEEEEEEEEEEEEEEEEEEEEEEEEEEEEEEEEEEEEEEEEEEEEEEEEEEEEE

%BBBBBBBBBBBBBBBBBBBBBBBBBBBBBBBBBBBBBBBBBBBBBBBBBBBBBBBBBBBBBBBBBBBBBBBBB
%BBBBBBBBBBBBBBBBBBBBBBBBBBBBBBBBBBBBBBBBBBBBBBBBBBBBBBBBBBBBBBBBBBBBBBBBB
%BBBBBBBBBBBBBBBBBBBBBBBBBBBBBBBBBBBBBBBBBBBBBBBBBBBBBBBBBBBBBBBBBBBBBBBBB
%BBBBBBBBBBBBBBBBBBBBBBBBBBBBBBBBBBBBBBBBBBBBBBBBBBBBBBBBBBBBBBBBBBBBBBBBB
%BBBBBBBBBBBBBBBBBBBBBBBBBBBBBBBBBBBBBBBBBBBBBBBBBBBBBBBBBBBBBBBBBBBBBBBBB

\currfilename.\\

\end{document}